\documentclass[reqno,a4paper]{amsart}
\usepackage[active]{srcltx}
\usepackage{fancyhdr}
\setlength{\parindent}{0pt}
\setlength{\parskip}{6pt plus 2pt minus 1pt}

\usepackage{times}
\usepackage{mathrsfs} 
\usepackage{latexsym,amsopn,amssymb,amsmath}
\usepackage{amsthm}
\usepackage{amsfonts,amsbsy,amscd,stmaryrd}
\usepackage{paralist}
\usepackage{hyperref}
\hypersetup{colorlinks=true,linkcolor=Emerald,urlcolor=blue} 
\usepackage[usenames,dvipsnames,svgnames,table]{xcolor}
%
%
%

%

%
%

\newcommand{\C}{\mathbb{C}}

\newcommand{\N}{\mathbb{N}}

\newcommand{\R}{\mathbb{R}}

\newcommand{\ca}{\mathcal{A}}

\newcommand{\ce}{\mathcal{E}}

\newcommand{\ch}{\mathcal{H}}

\newcommand{\cs}{\mathcal{S}}

%

%


\def\eith{\e^{\i tH}}

\def\d{\mathrm{d}}

\def\e{\mathrm{e}}

\def\i{\mathrm{i}}



\def\braket#1#2{\langle{#1}|{#2}\rangle}

\def\jap#1{\langle {#1} \rangle}

\DeclareMathOperator*{\slim}{s-lim}		

\def\sign{\text{ sign\,}}

\def\qed{\hfill $\Box$\medskip}
\long\def\symbolfootnote[#1]#2{\begingroup%
\def\thefootnote{\fnsymbol{footnote}}\footnote[#1]{#2}\endgroup}
%
\newtheorem{theorem}{Theorem}

\newtheorem{proposition}[theorem]{Proposition}
\newtheorem{corollary}[theorem]{Corollary}

\newtheorem{definition}[theorem]{Definition}
\newtheorem{remark}[theorem]{Remark}

\newtheorem*{theorem*}{Theorem}  
%
%
\makeatletter
\newcommand{\labitem}[2]{%
\def\@itemlabel{\textit{#1}.}
\item
\def\@currentlabel{#1}\label{#2}}
\makeatother

\makeatletter
\newcommand{\labitemi}[2]{%
\def\@itemlabel{\textit{#1}}
\item
\def\@currentlabel{#1}\label{#2}}
\makeatother

\begin{document}

\title[Abstract theory of decay estimates: perturbed Hamiltonians]{Abstract theory of decay estimates: perturbed Hamiltonians}
\author[M. Larenas]{Manuel Larenas}
\address{Rutgers University, Department of Mathematics, 110 Freylinghuysen Road, Piscataway, NJ 08854, U.S.A.}
\email{mlarenas@math.rutgers.edu}
\thanks{The first author was partially supported by NSF DMS-1201394.}
\author[A. Soffer]{Avy Soffer}
\email{soffer@math.rutgers.edu}
\date{}
\begin{abstract} For two self-adjoint operators $H,A$ we show that a general commutation relation of type $[H,\i A]=Q(H)+K$, in addition to regularity of $H$ and Kato-smoothness of $K$, guarantee {\em pointwise} in time decay rates of diverse order. The methodology is based on the construction of a modified conjugate operator $\tilde{A}$ that reduces the problem to previously developed estimates when $K=0$. Our results apply to energy thresholds and do not rely on resolvent estimates. We discuss applications for the Schr\"odinger equation (SE) with potential of critical decay, and for the free SE on an asymptotically flat manifold.
\end{abstract}

\maketitle

\tableofcontents

\section{Introduction}

In the spectral analysis of self-adjoint operators, methods relying on the positivity of a commutator have very important applications. This approach can be traced back to the work of Putnam in 1967 \cite{P}, whose main result relates the condition $[H,\i A]\geq 0$ with the absolute continuity of the range of $[H,\i A]$. The number of applications of this setting is greatly restricted by the boundedness of the conjugate operator $A$ and the global assumption on the commutator. In the fundamental work of Mourre in 1981 \cite{Mo}, the assumptions are more flexible and admit a variety of extensions. In his work, it is required that $[H,\i A]$ is dominated by $H$ (for $A$ only self-ajoint) and the positivity assumption is represented by the so-called \emph{strict Mourre estimate} $E(J)[H,\i A]E(J)\geq a E(J)$, where $E$ is the spectral measure of $H$, $a$ is a positive constant and $J$ is a Borel set of $\R$. The main consequences of these conditions are a \emph{limiting absorption principle}, that is, a control of the resolvent of $H$ close to the real axis, and the absolute continuity of the spectrum of $H$ in the interval $J$. This framework also admits a compact perturbation on the right hand side, which results in the possible presence of discrete spectrum in $J$.  It was later shown that these commutator estimates also imply time decay estimates, and optimal propagation estimates \cite{SS,HSS}.

The \emph{weakly conjugate operator method} developed in \cite{BGM,BKM} can be seen as an extension of Putnam's work with elements of Mourre theory. Here the estimate $[H,\i A]>0$, along with an assumption of regularity of type $[H,\i A]\in C^1(A)$, lead to a limiting absorption principle. An important extension was proposed by Richard \cite{Ri} which deals with operators that are not purely absolutely continuous. In this case the positive commutator above is replaced by the condition $[H,\i A]>cH$ for $c\geq 0$. We also mention the work in \cite{MRT,MT} where the main assumption is of type $[H,\i A]\geq 0$, in other words, the injectivity may fail.

In this paper and its predecessor \cite{GLS}, we develop a methodology that uses a commutation relation (not necessarily involving positivity) to derive  pointwise decay estimates in time for an abstract Hamiltonian $H$. Estimates of this type were first obtained by Jensen and Kato \cite{JK} for $H=-\Delta+V$ in dimension $n=3$. They proved estimates in weighted $L^2(\R^3)$ spaces through a resolvent expansion around zero. A unified approach for all dimensions was later developed by Jensen and Nenciu \cite{JN1}. Journ\'e, Soffer and Sogge \cite{JSS} proved global estimates for $H=-\Delta+V$ in $n\geq 3$. Numerous extensions followed, see e.g. \cite{DSS,Ya,EGG,Gol,GS,Sch} and cited references.

The conjugate operator method plays a crucial work in the derivation of decay estimates and resolvent estimates in many different situations (see for instance \cite{SS,HSS,Ger}). However, these techniques are less useful for problems on manifolds. In fact, most of the results in this context are of local decay type and Strichartz estimates \cite{RT,BSo1,BSo2,DR,Ta}. In this work we show that such results can be used to derive Kato-smoothness of the terms in the commutation relation and subsequently use this property to obtain pointwise decay estimates in an abstract setting.

The pointwise decay estimates of the Kato-Jensen type play an important role in applications. The $L^p$ decay estimates are proven using them, as well as extensions to time dependent hamiltonians of the charge transfer type. It was remarked by Ginibre, that the Kato-Jensen estimates imply a weak version of the $L^p$ estimates, in certain cases.
However, the techniques used to prove such estimates relies on detailed and explicit knowledge of the Green's function of the unperturbed hamiltonian.
The abstract approach may therefore allow extensions to more general classes of hamiltonian operators, which can not be simply represented as a perturbation of a solvable system.

{\bf Acknowledgment:} We would like to thank V. Georgescu for helpful remarks.

\section{Preliminaries}

Let $A$ and  $H$ be two self-adjoint operators on a Hilbert space $\ch$. We are interested in extending the results of \cite{GLS} to the case where $[H,\i A]$ is not necessarily equal to a function of $H$. In this work we will assume a more general commutation relation which will yield similar decay estimates by means of a suitable adaptation of the conjugate operator $A$.

We now review some standard definitions in functional analysis. As usual, we write $\jap{x}=(1+x^2)^{1/2}$. Denote $\ch^1=D(H)$ the domain of $H$ and consider its adjoint space $\ch^{-1}=D(H)^*$. The resolvent of $H$ is defined as $R(z)=(H-zI)^{-1}$ for $z\in\rho(z)$ the resolvent set of $H$. For $P$ a bounded operator, we shall say that $P$ \emph{commutes with} $H$ if for any $t\in\R$ the relation $P\e^{\i tH}=\e^{\i tH}P$ holds in $B(\ch)$. This is equivalent to $P\varphi(H)=\varphi(H)P$ for any bounded Borel function $\varphi:\R\to\C$. If $Q$ is unbounded and $D(Q)\supset\ch^1$ then we say that $Q$ commutes with $H$ if the above identity holds in $B(\ch^1,\ch)$ for $Q$.

Consider $Q$ a densely defined operator on $\ch$ with $D(Q)\supset\ch^1$. We say that $Q$ is $H$\emph{-bounded with relative norm }$a$ if for some $a,b\in\R$ one has $\|Q\psi\|\leq a\|H\psi\|+b\|\psi\|$, for all $\psi\in\ch^1$.

If $S$ is a bounded operator on $\ch$ then we denote $[A,S]_\circ$ the
sesquilinear form on $D(A)$ defined by
$[A,S]_\circ(u,v)=\braket{Au}{Sv}-\braket{u}{SAv}$. We
say that \emph{$S$ is of class $C^1(A)$}, and we write $S\in C^1(A)$,
if $[A,S]_\circ$ is continuous for the topology induced by $\ch$ on
$D(A)$ and then we denote $[A,S]$ the unique bounded operator on $\ch$
such that $\braket{u}{[A,S]v}=\braket{Au}{Sv}-\braket{u}{SAv}$ for all
$u,v\in D(A)$. We consider now the rather subtle case of unbounded operators. Note
that we always equip the domain of an operator with its graph
topology.  If $H$ is a self-adjoint operator on $\ch$ then
$[A,H]_\circ$ is the sesquilinear form on $D(A)\cap D(H)$ defined by
$[A,H]_\circ(u,v)=\braket{Au}{Hv}-\braket{Hu}{Av}$. A convenient definition of the $C^1(A)$ class for any self-adjoint
operator is as follows. Let $ R(z)= (H-z)^{-1}$ for $z$ in the
resolvent set $\rho(H)$ of $H$.  We say that \emph{$H$ is of class
  $C^1(A)$} if $R(z)\in C^1(A)$ for some (hence for all)
$z\in\rho(H)$. In this case, Proposition 6.2.10 in \cite{ABG} shows that $D(A)\cap\ch^1$ is dense in $\ch^1$ and hence $[A,H]_\circ$ extends to a uniquely determined continuous sesquilinear form $[A,H]$ on $\ch^1$. For further properties and examples of the $C^1$ regularity we refer to \cite{ABG} and \cite{GG}.

Finally, we discuss the notion of Kato-smoothness. We shall say that a closed operator $E$ is $H$\emph{-smooth on the range of a bounded operator} $P$ if and only if for each $\psi\in\ch$ and each $\epsilon\neq 0$, $R(\lambda+\i\epsilon)P\psi\in D(E)$ for almost all $\lambda\in\R$ and moreover $$\|E\|_H^2=\sup_{\|\psi\|=1}\frac{1}{4\pi^2}\int_{-\infty}^{\infty}\left(\|ER(\lambda+\i\epsilon)P\psi\|^2+\|ER(\lambda-\i\epsilon)P\psi\|^2\right)\d\lambda<\infty.$$
In particular $E$ is $H$-smooth on the range of $P$ if and only if for all $\psi\in\ch$, $\e^{\i tH}P\psi\in D(E)$ for almost every $t\in\R$ and $$\int_{-\infty}^{\infty}\left\|E\e^{-\i tH}P\psi\right\|^2\d t\leq 2\pi\|E\|_H^2\|P\psi\|^2.$$

In several applications the smoothness can be controlled more precisely using Sobolev norms as follows. Let $M$ be an $n$-dimensional Riemannian manifold with a smooth Riemannian metric $g_{ij}$. Consider the Hilbert space $\ch:=L^2(M)$ with the inner product defined as $\braket{\phi}{\psi}=\int_M \phi(x)\overline{\psi(x)}\d g$, where $\d g:=\sqrt{\textrm{det }g_{ij}}$. Denote by $\|\cdot\|$ the norm induced by this inner product, that is, $\|\phi\|=\int_M |\phi(x)|^2\d g$. The self-adjoint operator $H$ defined on $\ch$ enjoys the standard functional calculus and one can define the homogeneous Sobolev norms $\|\psi\|_{\dot{H}^s(M)}:=\||H|^{s/2}\psi\|$, for $0\leq s\leq 1$.

We then generalize the notion of $H$-smoothness taking the supremum on a subspace of $\ch$ instead of the whole space. We will say that a closed operator $E$ is $|H|^s$\emph{-smooth on the range of a bounded operator} $P$ if and only if for all $\psi\in\dot{H}^s(M)$, $\e^{\i tH}P\psi\in D(E)$ for almost every $t\in\R$ and $$\int_{-\infty}^{\infty}\left\|E\e^{-\i tH}P\varphi\right\|^2\d t\leq C_E\|P\psi\|_{\dot{H}^s(M)}^2.$$

\section{Assumptions}

Let $H$ and $A$ be two self-adjoint operators on the Hilbert space $\ch=L^2(M)$, where $M$ is a smooth $n$-dimensional Riemannian manifold equipped with the standard $L^2$-inner product $\braket{\cdot}{\cdot}$ and norm $\|\cdot\|$. Let $P$ be an orthogonal projection and $Q$ be an $H$-bounded operator with relative norm $a\geq 0$. $P$ and $Q$ commute with $H$. Let $E,F$ be linear operators such that $D(E)\cap D(F)\supset P\ch^1$. The main assumptions of this paper are established as follows.

\begin{itemize}
\labitemi{(H)}{H} \begin{itemize}
\item $H$ is of class $C^1(A)$
\item $H$ and $A$ satisfy the commutation relation $P[H,\i A]P=P(Q+K)P$ for $K\equiv F^*E$, in the sense that for all $\phi,\,\psi\in\ch^1$
\begin{itemize}
\item[]$(\phi,P[H,\i A]P\psi)=(P\phi,QP\psi)+(FP\phi,EP\psi).$
\end{itemize}
\item $K$ is symmetric on $\ch^1$, that is, $\braket{\phi}{K\psi}=\braket{K\phi}{\psi}$
\item $K\ch^1\subset\ch$ and $\jap{H}^{-s/2}K\jap{H}^{-s/2}$ is a bounded operator on $\ch$, for some $s>0$
\item $E$ and $F$ are $|H|^s$-smooth on the range of $P$
\end{itemize}
\end{itemize}

\begin{remark}
Since $H$ is of class $C^1(A)$, the sesquilinear form $[H,\i A]_\circ$ on $\ch^1\cap D(A)$ extends to a continuous operator in $B(\ch^1,\ch^{-1})$. Then the commutation relation of \ref{H} implies that the sesquilinear form $(P\phi,QP\psi)+(FP\phi,EP\psi)$ restricted to $\ch^1\cap D(A)$ also extends to a bounded operator in the same space. Therefore, the commutation relation can be written at the level of operators in $B(\ch^1,\ch^{-1})$ as $P[H,\i A]P=P(Q+K)P$, for a densely defined symmetric operator $K$.
\end{remark}

For some of the decay estimates we will require an additional smoothness condition.

\begin{itemize}
\labitemi{(Ha)}{Ha} The sesquilinear form $P[A,K]_\circ P$ defined on $D(A)\cap D(K)$ satisfies the identity $P[A,K]_\circ P(\phi,\psi)=(F'P\phi,E'P\psi)$, for all $\phi,\psi\in D(A)\cap D(K)$. Here $E',F'$ hold the same properties of $E,F$ in assumptions \ref{H}. Moreover, $P[A,K]_\circ P$ extends to a densely defined operator $K'$ such that $\jap{H}^{-s/2}K'\jap{H}^{-s/2}$ is bounded on $\ch$.
\end{itemize}


The commutation relation of \ref{H} is the crucial assumption of this paper. It replaces the identity of type $[H,\i A]=\varphi(H)$ in \cite{GLS}, with a much more general expression. Indeed, it now admits an operator $Q$ (controlled by $H$) in addition to a Kato-smooth perturbation. Observe that the $C^1(A)$ regularity provides a suitable framework in this case as well.

The strategy to derive decay estimates under these generalized conditions will be based on the construction of a new conjugate operator $\tilde{A}$ which will simplify the commutation identity, thus reducing the problem to our preceding estimates. The remaining assumptions of \ref{H} will justify this construction and other algebraic manipulations.

\section{The conjugate operator}

This section aims to construct the conjugate operator $\tilde{A}$. Using the functional calculus define for $s>0$ the cut-off $h_s(H):=\jap{H}^{-s/2}$. For any operator X denote $X_h:=h_s(H)Xh_s(H)$ (omitting the parameter $s$ in $X_h$ for brevity).

We first consider the operator $A_h$. Note that since $H\in C^1(A)$, the operator $h_s(H)$ is of class $C^1(A)$ for $s$ large enough (see the comment before Theorem 3.7 in \cite{GLS}). Moreover, by Lemma 6.2.9 in \cite{ABG}, the $C^1(A)$ condition for $h_s(H)$ holds for any $s>0$. This follows from the continuity of the form $[A,h_s(H)]$  for the topology induced by $\ch$, which is a consequence of assumptions \ref{H}. Note that $D(A_h)\supset D(A)$ and therefore $A_h$ is a densely defined symmetric operator. With some abuse of notation we denote its closure by $A_h$. Lemma 7.2.15 in \cite{ABG} proves that $A_h$ is self-adjoint and $D(A)$ is a core.

The next step is to add a linear perturbation to $A_h$. Define for each $t\in\R$ the operator $h_s(H)U_t\,h_s(H):=\int_0^t\e^{-\i sH}PK_hP\e^{\i sH}\,\d s$, which is bounded under assumptions \ref{H}. We now show that the limiting operator $B_h:=\slim_{t\to\infty}h_s(H)U_th_s(H)$ is well-defined and bounded as well. Let $\phi,\,\psi$ in $\ch$.
\begin{align}\label{eq:U_Cauchy}\nonumber
&|\braket{\phi}{h_s(H)(U_t-U_r) h_s(H)\psi}|\\ \nonumber
&\qquad=\left|\int_r^t \braket{h_s(H)P\phi}{\e^{-\i sH}K\e^{\i sH}h_s(H)P\psi}\d s\right|\\\nonumber
&\qquad\leq\int_r^t  \big|\braket{F\e^{\i sH}h_s(H)P\phi}{E\e^{\i sH}h_s(H)P\psi}\big|\d s\\\nonumber
&\qquad\leq\left(\int_0^\infty\|F\e^{\i sH}h_s(H)P\phi\|^2\d s\right)^{1/2}\left(\int_r^t\|E\e^{\i sH}h_s(H)P\psi\|^2\d s\right)^{1/2}\\\nonumber
&\qquad\leq  C_F\|h_s(H)P\phi\|_{\dot{H}^s(M)}\left(\int_r^t\|E\e^{\i sH}h_s(H)P\psi\|^2\d s\right)^{1/2}\\
&\qquad\leq  C_F\|\phi\|\left(\int_r^t\|E\e^{\i sH}h_s(H)P\psi\|^2\d s\right)^{1/2}.
\end{align}
Since the integrand of the last step is in $L^1(\R)$ and $\phi$ is arbitrary, we conclude that the sequence converges strongly to the bounded operator $B_h$.

Finally, define $\tilde{A}:=A_h+B_h$. Note that $D(\tilde{A})=D(A_h)\supset D(A)$. The following result justifies this construction.

\begin{proposition}\label{prop:Atilde}
Assume \ref{H}. Then $\tilde{A}$ defined as above is self-adjoint. Moreover, $H$ is of class $C^1(\tilde{A})$ and the continuous sesquilinear form $[H,\i \tilde{A}]_\circ$ on $\ch^1\cap D(\tilde{A})$ is identified with an operator in $B(\ch^1,\ch)$ satisfying the commutation relation $P[H,\i \tilde{A}]P=Q_hP$.
\end{proposition}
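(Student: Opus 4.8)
The plan is to establish the three assertions in turn, exploiting throughout that $h_s:=h_s(H)=\jap{H}^{-s/2}$ and $P$ are functions/spectral objects of $H$ and therefore commute with $H$, with $R(z)$, and with $\e^{\i tH}$. \textbf{Self-adjointness:} $A_h$ is already self-adjoint with core $D(A)$ by the construction recalled just before the statement. For $B_h$, note that $K$ is symmetric on $\ch^1$, so each integrand $\e^{-\i sH}PK_hP\e^{\i sH}$ is bounded and self-adjoint; hence $h_sU_th_s$ is bounded self-adjoint and its strong limit $B_h$ is bounded self-adjoint. Since a bounded symmetric perturbation does not affect self-adjointness, $\tilde A=A_h+B_h$ is self-adjoint on $D(\tilde A)=D(A_h)\supset D(A)$.

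\textbf{Regularity.} To obtain $H\in C^1(\tilde A)$ I would verify $R(z)\in C^1(\tilde A)$ for one $z\in\rho(H)$. The $B_h$-part is automatic because $B_h$ is bounded. For $A_h$, since $h_s$ commutes with $R(z)$, evaluating the commutator form on the core $D(A)$ gives $[A_h,R(z)]=h_s[A,R(z)]h_s$; the inner form $[A,R(z)]$ is bounded precisely because $H\in C^1(A)$, so $[A_h,R(z)]$ extends to a bounded operator and $R(z)\in C^1(A_h)$. Combining the two contributions yields $R(z)\in C^1(\tilde A)$ with $[\tilde A,R(z)]=h_s[A,R(z)]h_s+[B_h,R(z)]$, and the standard criterion then gives $H\in C^1(\tilde A)$ together with the extension of $[H,\i\tilde A]_\circ$ to a continuous form on $\ch^1\cap D(\tilde A)$.

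\textbf{The reduced commutation relation.} I would split $[H,\i\tilde A]=[H,\i A_h]+[H,\i B_h]$ at the level of the extended forms. Since $h_s$ commutes with $H$, one has $[H,\i A_h]=h_s[H,\i A]h_s$, and sandwiching with $P$ and invoking \ref{H} gives $P[H,\i A_h]P=P(Q_h+K_h)P$; this maps $\ch^1$ into $\ch$ because $Q$ is $H$-bounded and $K_h=\jap{H}^{-s/2}K\jap{H}^{-s/2}$ is bounded on $\ch$. For the perturbation, applying the fundamental theorem of calculus to $s\mapsto\e^{-\i sH}PK_hP\e^{\i sH}$ produces, for $\phi,\psi\in\ch^1\cap D(\tilde A)$,
\[
[H,\i(h_sU_th_s)]_\circ(\phi,\psi)=\langle\phi,PK_hP\psi\rangle-\langle\phi,\e^{-\i tH}PK_hP\e^{\i tH}\psi\rangle .
\]
Letting $t\to\infty$, the left-hand side converges to $[H,\i B_h]_\circ(\phi,\psi)$ because $h_sU_th_s\to B_h$ strongly; $B_h$ is designed exactly so that the surviving term cancels the $K_h$-contribution of $[H,\i A_h]$, leaving $P[H,\i\tilde A]P=PQ_hP$, which is the asserted relation $Q_hP$.

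\textbf{Main obstacle.} The crux is the boundary term $\e^{-\i tH}PK_hP\e^{\i tH}$. Rewriting it through $K=F^*E$ as $\langle F\e^{\i tH}h_sP\phi,E\e^{\i tH}h_sP\psi\rangle$ and applying Cauchy--Schwarz together with the $|H|^s$-smoothness of $E$ and $F$ only furnishes $L^2(\R_+)$-integrability of the two factors, hence decay of the product along \emph{some} sequence $t_n\to\infty$. The genuine vanishing of the limit is then recovered from the already-established strong convergence of $h_sU_th_s$: it forces the boundary term to converge, so its value equals the subsequential limit $0$. Making this interchange of limit and derivative rigorous on $\ch^1\cap D(\tilde A)$, rather than purely formal, is the delicate part of the argument; the sign conventions in the forms $[H,\i A_h]_\circ$ and $[H,\i B_h]_\circ$ must also be tracked consistently so that the cancellation, and not a doubling, of $K_h$ occurs.
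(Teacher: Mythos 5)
Your strategy coincides with the paper's: self-adjointness of $\tilde A$ via Kato--Rellich (bounded symmetric perturbation of the self-adjoint $A_h$); the splitting $[H,\i\tilde A]_\circ=[H,\i A_h]_\circ+[H,\i B_h]_\circ$ with $[H,\i A_h]=[H,\i A]_h$, so that $P[H,\i A_h]P=P(Q_h+K_h)P$; and the fundamental-theorem-of-calculus identity for $[H,\i(h_sU_th_s)]_\circ$, with the boundary term killed along a subsequence by Cauchy--Schwarz and the $|H|^s$-smoothness of $E,F$. Your resolution of the limit interchange (the left side converges because $h_sU_th_s\to B_h$ strongly, so the boundary term has a full limit, which must equal the subsequential limit $0$) is sound, and your direct form computation on $\ch^1$ actually bypasses the $H_\epsilon=HR_\epsilon$ regularization that the paper uses; the resolvent-level verification of $H\in C^1(\tilde A)$ is also fine.

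The genuine gap is the sign issue you flag in your last paragraph but never resolve, and as written your final step fails. Your FTC identity is the correct one: since $\tfrac{\d}{\d s}\,\e^{-\i sH}S\e^{\i sH}=-\e^{-\i sH}[H,\i S]\e^{\i sH}$, one gets $[H,\i(h_sU_th_s)]_\circ=PK_hP-\e^{-\i tH}PK_hP\e^{\i tH}$, hence in the limit $[H,\i B_h]_\circ=+PK_hP$. Combined with $P[H,\i A_h]P=P(Q_h+K_h)P$, this gives $P[H,\i\tilde A]P=PQ_hP+2PK_hP$: the doubling, not the cancellation, so $B_h$ as defined is \emph{not} ``designed exactly so that the surviving term cancels.'' With $U_t$ as in the paper, cancellation requires taking $\tilde A=A_h-B_h$; equivalently, one must define $h_s(H)U_th_s(H)=\int_0^t\e^{\i sH}PK_hP\e^{-\i sH}\,\d s$ (exponentials flipped), for which the same FTC argument yields $[H,\i B_h]_\circ=-PK_hP$ and hence $P[H,\i\tilde A]P=PQ_hP$. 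You are in good company: the paper's own display \eqref{eq:commut_HU} asserts the integral equals $R_\epsilon\e^{-\i tH}PK_hP\e^{\i tH}R_\epsilon-R_\epsilon PK_hPR_\epsilon$, which has the opposite sign to what the FTC gives; the paper carries that slip consistently through to the stated conclusion, whereas you derived the correct identity and then asserted a conclusion that does not follow from it. Once the definition (or the sign) of $B_h$ is corrected, the rest of your argument goes through, including the Kato-smoothness estimates, which are two-sided in $t$ and therefore insensitive to the flip of exponentials.
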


\begin{proof} Note that $B_h$ is bounded and symmetric by construction, thus the self-adjointness of $A_h$ is guaranteed by the Kato-Relich theorem. The $C^1(\tilde{A})$ property follows from the identity $[H,A_h]=[H,A]_h$ in $B(\ch^1,\ch^{-1})$ of Prop. 7.2.16, in addition to Prop. 6.2.10 in \cite{ABG}.

We now prove the commutation relation. Using the functional calculus define $R_\epsilon=(1+\i\epsilon H)^{-1}$ and the bounded operator $H_\epsilon:=HH_\epsilon=(\i\epsilon)^{-1}(1-R_\epsilon)$. Note that $P[H_\epsilon,K_h]_\circ P$ is a continuous sesquilinear form on $\ch$ satisfying $P[H_\epsilon,K_h]_\circ P=\i\epsilon^{-1}P[R_\epsilon,K_h]_\circ P=R_\epsilon P[H,K_h]_\circ P R_\epsilon$ in form sense. Now we calculate
\begin{align}\label{eq:commut_HU}\nonumber
[H_\epsilon,\i h_s(H)U_th_s(H)]_\circ &=\frac{\i}{\epsilon}\int_0^t\e^{-\i sH}[R_\epsilon,\i PK_hP]_\circ\e^{\i sH}ds\\ \nonumber
&=R_\epsilon\left(\int_0^t\e^{-\i sH}P[H,\i K_h]_\circ P\e^{\i sH}ds\right)R_\epsilon\\
&=R_\epsilon \e^{-\i tH}PK_hP\e^{\i tH}R_\epsilon-R_\epsilon PK_hPR_\epsilon.
\end{align}
Note that for any $\phi,\,\psi\in\ch^1$ one has $(\phi,\e^{-\i tH}PK_hP\e^{\i tH}\psi)\to 0$ for a subsequence $t_k\to\infty$. This follows from the estimate
\begin{align*}
\int_0^\infty|(\phi,\e^{-\i tH}PK_hP\e^{\i tH}\psi)|\d t &=\int_0^\infty|(F\eith h_s(H) P\phi,E\eith h_s(H)P\psi)|\d t\\
&\leq C \int_0^\infty\big(\big\|F\eith h_sP\phi\big\|^2+(\big\|F\eith h_sP\phi\big\|^2\big)\d t\\
&\leq C_F\|\phi\|^2+C_E\|\psi\|^2.
\end{align*}

Then let $\epsilon\to 0$ on the RHS of \eqref{eq:commut_HU} which converges to $-PK_hP$ in the weak form sense on $\mathcal{H}$. Finally, by making $t_k\to\infty$ in $[H_\epsilon,\i h_s(H)U_{t_k}h_s(H)]_\circ$ and then $\epsilon\to 0$ in weak form sense in $\ch^1$, we conclude that the form $[H,\i B_h]_\circ$ with domain $\ch^1$ extends to a bounded sesquilinear form on $\ch$ satisfying $[H,\i B_h]_\circ=-PK_hP$.

Now, as sesquilinear forms in $ D(H)\cap D(\tilde{A})$ we have $[H,\i \tilde{A}]_\circ=[H,\i A_h]_\circ+[H,\i B_h]_\circ$. By the previous discussion we conclude that $[H,\i \tilde{A}]_\circ$ is continuous for the topology induced by $H$ on $\ch^1\cap D(\tilde{A})$ and moreover
\begin{eqnarray*}
P[H,\i \tilde{A}]P &=& P[H,\i A]_hP-PK_hP\\
&=&Ph_s(H)(Q+K)h_s(H)P_hP-PK_hP\\
&=&PQ_hP,
\end{eqnarray*}
which concludes the proof.\qed
\end{proof}

Once we have established a suitable commutation relation for $H$ and $\tilde{A}$ in Proposition \ref{prop:Atilde}, we recall two important commutator identities used in our previous work. The proofs are analogous and use $\tilde{A}$ as the conjugate operator.

\begin{proposition}
Let $H$ be a self-adjoint operator of class $C^1(\tilde{A})$.  Then the restriction of $[\tilde{A},\e^{\i tH}]_\circ$ to $D(\tilde{A})\cap\ch^1$ extends to a continuous form $[\tilde{A},\e^{\i tH}]$ on  $\ch^1$ and, in the strong topology of space of sesquilinear forms on $\ch^1$, we have $$[\e^{\i tH},\tilde{A}]=\int_0^t\e^{\i(t-s)H}[H,\i\tilde{A}]\e^{\i sH}\d s.$$ Moreover, under the conditions of \ref{H} we have $P[\e^{\i tH},\tilde{A}]P=tPQ_hP\e^{\i tH}$ as operators in $B(\ch^1,\ch)$.
\end{proposition}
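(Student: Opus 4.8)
The plan is to establish the two assertions separately. The integral (Duhamel) formula holds for \emph{any} self-adjoint $H$ of class $C^1(\tilde A)$ and thus does not use the structural hypotheses \ref{H}; I would prove it exactly as the analogous statement in \cite{GLS}, with $\tilde A$ now in the role previously played by $A$, this being legitimate because Proposition \ref{prop:Atilde} guarantees $H\in C^1(\tilde A)$. The final ``moreover'' identity is then a short algebraic consequence of this formula together with the commutation relation $P[H,\i\tilde A]P=PQ_hP$ supplied by Proposition \ref{prop:Atilde}.

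For the formula itself, recall that $H\in C^1(\tilde A)$ gives, via Proposition 6.2.10 of \cite{ABG}, that $D(\tilde A)\cap\ch^1$ is dense in $\ch^1$ and that $[H,\i\tilde A]_\circ$ extends to a continuous form $[H,\i\tilde A]$, here an element of $B(\ch^1,\ch)$ by Proposition \ref{prop:Atilde}. The integrand $s\mapsto\e^{\i(t-s)H}[H,\i\tilde A]\e^{\i sH}$ is then a strongly continuous, uniformly bounded family of forms on $\ch^1$, so the right-hand side is a well-defined form on $\ch^1$ obtained as a strong limit of Riemann sums, which is the meaning of convergence ``in the strong topology of the space of sesquilinear forms on $\ch^1$''. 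To identify it with $[\e^{\i tH},\tilde A]$, I would first fix $u,v\in D(\tilde A)\cap\ch^1$ and consider the scalar function $\tau\mapsto[\tilde A,\e^{\i\tau H}]_\circ(u,v)$, which vanishes at $\tau=0$. Since $u,v\in\ch^1=D(H)$, the curves $\tau\mapsto\e^{\pm\i\tau H}u$ and $\tau\mapsto\e^{\pm\i\tau H}v$ are differentiable in $\ch$, and differentiating in $\tau$ and reassembling the resulting terms by means of the $C^1(\tilde A)$ identity shows that this function and the corresponding scalar Duhamel integral have the same derivative and the same value at $\tau=0$, hence coincide; a density argument then removes the restriction $u,v\in D(\tilde A)$.

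The technical heart, and the step I expect to be the main obstacle, is making the differentiation rigorous: because $\tilde A$ is unbounded one cannot differentiate $\e^{\i\tau H}\tilde A v$ directly, since $\e^{\i\tau H}$ need not preserve $D(\tilde A)$. I would resolve this by regularization, replacing $H$ by the bounded operator $H_\epsilon=(\i\epsilon)^{-1}(1-R_\epsilon)$ with $R_\epsilon=(1+\i\epsilon H)^{-1}$ exactly as in the proof of Proposition \ref{prop:Atilde} (alternatively, replacing $\tilde A$ by the bounded $\tilde A(1+\i\nu\tilde A)^{-1}$), carrying out the differentiation for the regularized objects, and passing to the limit using the uniform bound on $[H,\i\tilde A]$ furnished by the $C^1(\tilde A)$ regularity. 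It is precisely here that $C^1(\tilde A)$, rather than mere symmetry of $\tilde A$, is indispensable.

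For the ``moreover'' part, I would insert the projection $P$ on both sides of the integral formula. Since $P$ commutes with $H$, it commutes with every $\e^{\i sH}$, so $P\e^{\i(t-s)H}=\e^{\i(t-s)H}P$ and $\e^{\i sH}P=P\e^{\i sH}$, giving $P[\e^{\i tH},\tilde A]P=\int_0^t\e^{\i(t-s)H}\,P[H,\i\tilde A]P\,\e^{\i sH}\,\d s$. Applying the commutation relation of Proposition \ref{prop:Atilde} replaces the middle factor by $PQ_hP$. Now $PQ_hP$ commutes with $H$: indeed $Q$ commutes with $H$ and $h_s(H)=\jap{H}^{-s/2}$ is a function of $H$, so $Q_h=h_s(H)Qh_s(H)$ commutes with $H$, and $P$ does as well. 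Hence $\e^{\i(t-s)H}PQ_hP\e^{\i sH}=PQ_hP\,\e^{\i tH}$ is independent of $s$, and integrating the constant integrand over $[0,t]$ yields $P[\e^{\i tH},\tilde A]P=t\,PQ_hP\,\e^{\i tH}$, as claimed, the identity holding in $B(\ch^1,\ch)$ because $Q_h$ is bounded.
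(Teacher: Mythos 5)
Your proposal is correct and follows essentially the same route as the paper, whose entire proof is the citation ``Same as Theorem 3.7 in \cite{GLS}'': you simply reconstruct that argument with $\tilde{A}$ in place of $A$ (Duhamel formula via regularization of the unbounded objects, then the algebraic step using $P[H,\i\tilde{A}]P=PQ_hP$ and the fact that $PQ_hP$ commutes with $H$ so the integrand is constant in $s$). The only nitpick is that $Q_h$ is not bounded on $\ch$ in general but only as an element of $B(\ch^1,\ch)$ (since $Q$ is merely $H$-bounded), which is exactly what is needed for the final identity and does not affect your argument.
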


\begin{proof}
Same as Theorem 3.7 in \cite{GLS}.\qed
\end{proof}

\begin{proposition}\label{prop:resolvent}
Let $H$ be a self-adjoint operator of class $C^1(\tilde{A})$. Then $D(\tilde{A})\cap\ch^1$ is dense in $\ch^1$ and $$[\tilde{A},R(z)]=-R(z)[\tilde{A},H]R(z)\quad\text{for all }z\in\rho(H).$$
\end{proposition}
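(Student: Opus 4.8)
The plan is to recognize this as the standard resolvent--commutator identity of Mourre theory, now read off for $\tilde{A}$ in place of $A$, so that both assertions become consequences of the single hypothesis $H\in C^1(\tilde{A})$ together with the characterization of the $C^1$ class in \cite{ABG}. For the density statement I would argue exactly as in Section 2: since $H$ is of class $C^1(\tilde{A})$, the resolvent $R(z)$ lies in $C^1(\tilde{A})$, and Proposition 6.2.10 in \cite{ABG} then forces $D(\tilde{A})\cap\ch^1$ to be dense in $\ch^1$. The same characterization supplies the structural fact that $R(z)$ maps $D(\tilde{A})$ into itself, which is what legitimizes all the manipulations that follow.

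For the identity, I would first note that both sides are bounded operators on $\ch$: the left side because $H\in C^1(\tilde{A})$ means $[\tilde{A},R(z)]$ extends to an element of $B(\ch)$, and the right side because the composition $R(z)\colon\ch\to\ch^1$, then $[\tilde{A},H]\in B(\ch^1,\ch^{-1})$, then $R(z)\colon\ch^{-1}\to\ch$ is bounded. It therefore suffices to check equality of the associated sesquilinear forms on the dense subspace $D(\tilde{A})$. Fixing $u,v\in D(\tilde{A})$ and setting $\phi=R(\bar z)u$, $\psi=R(z)v$, the fact that $R(z)$ preserves $D(\tilde{A})$ and maps $\ch$ into $\ch^1$ gives $\phi,\psi\in D(\tilde{A})\cap\ch^1$, so the continuous extension of $[\tilde{A},H]_\circ$ may be evaluated at $(\phi,\psi)$ through the defining expression $\langle\tilde{A}\phi,H\psi\rangle-\langle H\phi,\tilde{A}\psi\rangle$.

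The computation then rests on the resolvent identities $HR(z)=I+zR(z)$ and $HR(\bar z)=I+\bar zR(\bar z)$. Substituting these and transferring $\tilde{A}$ and $H$ across the inner product via self-adjointness (permissible since $u,v,\phi,\psi$ all lie in $D(\tilde{A})$, and $R(\bar z)^*=R(z)$), one finds that the term $z\langle R(\bar z)u,\tilde{A}R(z)v\rangle$ produced by $HR(z)$ coincides, after using $\overline{\bar z}=z$, with the corresponding term produced by $HR(\bar z)$, so the two cancel in the difference that defines the form. What survives is precisely $\langle u,R(z)\tilde{A}v\rangle-\langle\tilde{A}u,R(z)v\rangle$, that is $-[\tilde{A},R(z)]_\circ(u,v)$; the minus sign carried by the right-hand side of the asserted identity then produces the match, and equality of the bounded operators follows by density.

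I expect the one genuinely delicate point to be that the naive operator computation $[\tilde{A},R(z)]=R(z)\big((H-z)\tilde{A}-\tilde{A}(H-z)\big)R(z)$ is not directly justified, because $\tilde{A}R(z)v$ need not lie in $D(H)$ and so $H$ cannot be applied to it. The form computation above is exactly what circumvents this: by testing against $R(\bar z)u$ and invoking $HR(\bar z)u=u+\bar zR(\bar z)u$ one only ever applies $H$ to vectors in $\ch^1$, and the identity emerges without ever forming $H\tilde{A}R(z)v$. Beyond this, the only care needed is the bookkeeping of the complex conjugates that arise when scalars leave the conjugate-linear slot of the inner product, which is what secures the cancellation of the resolvent terms. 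Everything else is inherited from the $C^1(\tilde{A})$ regularity established in Proposition~\ref{prop:Atilde}, and no estimate from assumptions \ref{H} beyond that regularity enters this statement.
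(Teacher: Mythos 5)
Your proof is correct, but it takes a genuinely different route from the paper: the paper's entire proof is a one-line citation of Proposition 6.2.10 in \cite{ABG}, which contains both the density statement and the commutator identity as stated. You invoke that proposition only for the density of $D(\tilde{A})\cap\ch^1$ in $\ch^1$ (and implicitly for the extension $[\tilde{A},H]\in B(\ch^1,\ch^{-1})$), and then verify the identity $[\tilde{A},R(z)]=-R(z)[\tilde{A},H]R(z)$ by hand at the level of sesquilinear forms. The verification is sound: the invariance $R(z)D(\tilde{A})\subset D(\tilde{A})$ is indeed a standard consequence of $R(z)\in C^1(\tilde{A})$ for a bounded operator (boundedness of the commutator form forces $R(z)u\in D(\tilde{A}^*)=D(\tilde{A})$ for $u\in D(\tilde{A})$); the cancellation of the $z$-terms via self-adjointness of $\tilde{A}$ together with $\overline{\bar z}=z$ is exactly right; and testing against $\phi=R(\bar z)u$, $\psi=R(z)v$ means $H$ is only ever applied to vectors of $\ch^1$, which, as you correctly identify, is the point that invalidates the naive operator computation $[\tilde{A},R(z)]=R(z)\big((H-z)\tilde{A}-\tilde{A}(H-z)\big)R(z)$. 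What your approach buys is a self-contained argument that makes transparent where each hypothesis enters (only the $C^1(\tilde{A})$ regularity supplied by Proposition \ref{prop:Atilde}, nothing further from \ref{H}); what the paper's approach buys is brevity, since the identity is verbatim part of the cited result in \cite{ABG}.
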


\begin{proof}
This is Proposition 6.2.10 in \cite{ABG}.\qed
\end{proof}

In the next section, it will be useful to ``commute $A$ through $B$'' in the important case $Q=cH$, for some $c\neq 0$. To give precise meaning to this, we consider the sesquilinear form $C_\circ$ on $D(\tilde{A})$ defined as $C_\circ(\phi,\psi):=P[\tilde{A},B_h]_\circ P(\phi,\psi)=(\tilde{A}P\phi,B_hP\psi)-(B_hP\phi,\tilde{A}P\psi)$. The next result justifies our assertion.

\begin{proposition}\label{prop:AB-BA}
Under conditions \ref{H} and \ref{Ha} in the case $Q=cH$, the sesquilinear form $C_\circ$ defined on $D(\tilde{A})$ as above can be extended to a bounded operator in $\ch$, which we denote by $C$.
\end{proposition}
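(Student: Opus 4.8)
The plan is to reduce the computation to the boundedness argument already carried out for $B_h$ in \eqref{eq:U_Cauchy}, isolating the one genuinely new contribution that the hypothesis $Q=cH$ is designed to control. First I would note that, since $\tilde{A}=A_h+B_h$ with $B_h$ bounded and self-adjoint, the term $(B_hP\phi,B_hP\psi)$ occurs symmetrically in $(\tilde{A}P\phi,B_hP\psi)$ and $(B_hP\phi,\tilde{A}P\psi)$ and therefore cancels, so that $C_\circ=P[A_h,B_h]_\circ P$ as forms on $D(\tilde{A})$. Writing $B_h$ as the strong limit of $\int_0^t W_\sigma\,\d\sigma$ with $W_\sigma:=\e^{-\i\sigma H}PK_hP\e^{\i\sigma H}$, the task becomes the analysis of $\int_0^\infty P[A_h,W_\sigma]_\circ P\,\d\sigma$, which I would organize by distributing the commutator across the three factors $\e^{-\i\sigma H}$, $PK_hP$ and $\e^{\i\sigma H}$ of $W_\sigma$.

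For the middle factor, the form $P[A_h,PK_hP]_\circ P$ is exactly what assumption \ref{Ha} governs: $P[A,K]_\circ P$ equals $(F'P\,\cdot\,,E'P\,\cdot\,)$ with $E',F'$ again $|H|^s$-smooth on $\Ran P$, so this contribution is estimated by the Cauchy--Schwarz argument of \eqref{eq:U_Cauchy} with $E',F'$ in place of $E,F$, and is a bounded form. For the two outer factors I would invoke the commutator identity of the preceding proposition, $P[\e^{\i\sigma H},\tilde{A}]P=\sigma PQ_hP\e^{\i\sigma H}$, and its analogue for $\e^{-\i\sigma H}$. Since $Q=cH$, the operator $Q_h=cH_h$ is a function of $H$, and each propagator commutator splits into a piece linear in $\sigma$ carrying $Q_h$ and a piece carrying $K_h$. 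The $K_h$-pieces produce double time integrals with two insertions of $K_h$; each insertion pairs, via Cauchy--Schwarz, with a smooth factor $E\e^{\i\cdot H}h_sP$ or $F\e^{\i\cdot H}h_sP$, so the $|H|^s$-smoothness from \ref{H} makes them absolutely convergent and bounded, just as in \eqref{eq:U_Cauchy}.

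The decisive step is the $\sigma$-linear contribution carrying $Q_h$. Because $Q_h=cH_h$ is a function of $H$ and hence commutes with the propagators, the two outer contributions combine into a single integrand proportional to $c\sigma\,[H_h,W_\sigma]$. Using the Heisenberg relation $\tfrac{\d}{\d\sigma}W_\sigma=-\i[H,W_\sigma]$ and reducing the dressed generator $H_h$ to $H$ by functional calculus (the discrepancy being a smoothing, hence Kato-integrable, remainder), this integrand becomes a total $\sigma$-derivative modulo bounded terms; an integration by parts then trades the non-integrable weight $\sigma$ for $\int_0^\infty W_\sigma\,\d\sigma$, a constant multiple of the undressed $B_h$, which is bounded. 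The hard part, and the reason the restriction $Q=cH$ is indispensable, is that only a $Q$ commuting with $H$ yields a total derivative here; a genuinely $H$-dependent $Q$ would leave an integrand with a non-integrable $\sigma$-weight and no such cancellation. I expect the remaining boundary term $\sigma W_\sigma$ to vanish along a sequence $\sigma_k\to\infty$ by the same $L^1$-decay argument used in the proof of Proposition \ref{prop:Atilde} to send $(\phi,W_\sigma\psi)\to 0$. Collecting the three types of contributions, each is bounded on $\Ran P$, so $C_\circ$ extends to a bounded operator $C$ on $\ch$.
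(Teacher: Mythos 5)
Your two structural observations are correct: the $B_h^2$ terms do cancel, so $C_\circ=P[A_h,B_h]_\circ P$, and the two $\sigma$-linear contributions do combine into $c\sigma[H_h,W_\sigma]$ (the subsequence argument for the boundary term $\sigma W_\sigma$ is also fine). The gap is in the step you yourself call decisive. The Heisenberg relation is $\tfrac{\d}{\d\sigma}W_\sigma=-\i[H,W_\sigma]$ with the \emph{undressed} generator $H$, whereas your integrand carries the dressed operator $H_h=h_s(H)Hh_s(H)=H\jap{H}^{-s}$. The discrepancy $H_h-H=H(\jap{H}^{-s}-1)$ is not a ``smoothing, hence Kato-integrable, remainder'': it is an unbounded function of $H$, comparable to $-H$ at high energies. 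Its contribution to your integral is $\int_0^\infty\sigma\,\e^{-\i\sigma H}Ph_s(H)[H_h-H,K]h_s(H)P\e^{\i\sigma H}\,\d\sigma$, which involves the commutator of $K$ with an unbounded function of $H$ --- an object controlled by nothing in \ref{H} or \ref{Ha} --- sitting under the non-integrable weight $\sigma\,\d\sigma$ with no derivative structure left to resum it (Kato-smoothness yields $L^2_\sigma$ bounds, never $\sigma$-weighted $L^1_\sigma$ bounds). So your integration by parts closes only when $s=0$, while the proposition is needed precisely for $s>0$ (the manifold application uses $s=1/2$). A second, related failure: after your resummation the $K_h$-pieces are exactly $\iint_{0\le\varsigma\le\sigma}[W_\sigma,W_\varsigma]\,\d\varsigma\,\d\sigma$, a double time integral with the two insertions of $K_h$ evolving at \emph{different} times; Cauchy--Schwarz against the smoothness bounds produces the wedge integral $\iint_{\varsigma\le\sigma}\|Eh_s(H)\e^{\i\varsigma H}P\psi\|^2\,\d\varsigma\,\d\sigma$, which diverges, so ``just as in \eqref{eq:U_Cauchy}'' does not apply --- that estimate has one time integral and two smooth factors at the same time.

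Both failures have the same root: expanding $[A_h,\e^{\pm\i\sigma H}]$ reintroduces exactly the secular ($\sigma$-growing) and doubly-integrated terms that $\tilde{A}$ was constructed to cancel. The paper never meets them: it keeps $\tilde{A}$ intact, so the only propagator commutator used is the clean $P[\e^{\i tH},\tilde{A}]P=tPQ_hP\e^{\i tH}$ with no $K_h$ tail, regularizes with $R_\epsilon=(1+\i\epsilon H)^{-1}$ to handle domains, and differentiates the \emph{evolved} form $C_t(\phi,\psi)=(\tilde{A}\e^{\i tH}P\phi,B_h\e^{\i tH}P\psi)-(B_h\e^{\i tH}P\phi,\tilde{A}\e^{\i tH}P\psi)$. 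Its derivative is identified (Step 1) with a regularization of $[B_h,cH_hP]_\circ+[\tilde{A}P,PK_hP]_\circ$, where the first term becomes $-\i cPK_{h^2}P$ and the second is expanded via \ref{H} and \ref{Ha} into products of smooth operators; every term then has two smooth factors at the same time $t$, hence is integrable in $t$ by a single Cauchy--Schwarz, and since $C_t\to 0$ one integrates back from infinity to bound $C_0(\phi,\psi)$ by $C\|\phi\|\|\psi\|$. To salvage your static decomposition you would have to keep $B_h$ glued to $A_h$ so that the $\sigma$-linear and double-integral terms cancel \emph{before} any estimate is attempted --- which is, in essence, the paper's proof.
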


\begin{proof}
\underline{Step 1}: Commutator identity

As before, define $R_\epsilon=(1+\i\epsilon H)^{-1}$ and the bounded operator $H_\epsilon:=HH_\epsilon$. For $\phi,\psi$ in $D(\tilde{A})$ consider the form
\begin{eqnarray}\label{eq:C_eps}\nonumber
D_\epsilon(\phi,\psi)&:=&\underbrace{\i((\tilde{A}H_\epsilon P\phi,B_hR_\epsilon^* P\psi)-(B_hH_\epsilon P\phi,\tilde{A}PR_\epsilon^*\psi))}_{D_\epsilon^1}\\
& &\underbrace{-\i((\tilde{A}PR_\epsilon\phi,B_hH_\epsilon^* P\psi)-(B_hR_\epsilon P\phi,\tilde{A}H_\epsilon^*P\psi))}_{D_\epsilon^2}.
\end{eqnarray}
We now use Prop. 3.8 in \cite{GLS} to calculate the commutator identities $P[H_\epsilon,\i \tilde{A}]P=cH_hR_\epsilon^2P$ and $[H_\epsilon,\i B_h]=-PR_\epsilon K_hR_\epsilon P$, which will be used to expand the expression above.
\begin{eqnarray*}
D_\epsilon^1&=&(P[\i \tilde{A},H_\epsilon]P\phi,B_hR_\epsilon^*P\psi)+\i(H_\epsilon \tilde{A}P\phi,B_hR_\epsilon^*\psi)-\i(P\phi,H_\epsilon^*B_h\tilde{A}PR_\epsilon^*\psi)\\
&=&-(cH_hR_\epsilon^2P\phi,B_hR_\epsilon^*P\psi)-(\tilde{A}P\phi,H_\epsilon^*\i B_hR_\epsilon^*\psi)+(P\phi,H_\epsilon^*\i B_h \tilde{A}PR_\epsilon^*\psi)\\
&=&-(cH_hR_\epsilon^2P\phi,B_hR_\epsilon^* P\psi)-(\tilde{A}P\phi,[H_\epsilon^*,\i B_h]R_\epsilon^*\psi)\\
& & -(\tilde{A}P\phi,\i B_hH_\epsilon^*R_\epsilon^*\psi)+(P\phi,[H_\epsilon^*,\i B_h]\tilde{A}PR_\epsilon^*\psi)+(P\phi,\i B_hH_\epsilon^*\tilde{A}PR_\epsilon^*\psi)\\
&=& -(cH_hR_\epsilon^2P\phi,B_hR_\epsilon^*P\psi)+(\tilde{A}P\phi,PR_\epsilon^*K_h(R_\epsilon^*)^2P\psi)\\
& & -(P\phi,PR_\epsilon^*K_hR_\epsilon^*P\tilde{A}PR_\epsilon^*\psi)-(\tilde{A}P\phi,\i B_hH_\epsilon^*R_\epsilon^*\psi)\\
& &+(P\phi,B_hP[H_\epsilon^*,\i \tilde{A}]PR_\epsilon^*\psi)+(P\phi,B_hP\i\tilde{A}H_\epsilon^*PR_\epsilon^*\psi)\\
&=& -(cH_hR_\epsilon^2P\phi,B_hR_\epsilon^*P\psi)+(\tilde{A}P\phi,PR_\epsilon^*K_h(R_\epsilon^*)^2P\psi)\\
& & -(PR_\epsilon K_hR_\epsilon P\phi,\tilde{A}PR_\epsilon^*\psi)+(B_hP\phi,cH_h(R_\epsilon^*)^2PR_\epsilon^*\psi)-D_\epsilon^2.
\end{eqnarray*}
Thus, $D_\epsilon(\phi,\psi)=[B_h,cH_hR_\epsilon^2P]_\circ(\phi,\bar{\psi}_\epsilon)+[\tilde{A}P,PR_\epsilon K_hR_\epsilon P]_\circ(\phi,\bar{\psi}_\epsilon)$, where $\bar{\psi}_\epsilon=R_\epsilon^*\psi$. Note that the expression on the RHS converges weakly in form sense as $\epsilon\to 0$.

The first term of the limit can be extended to the bounded operator $-\i cPK_{h^2}P$ (see proof of Prop. \ref{prop:Atilde}). On the other hand, by conditions \ref{H} and \ref{Ha} the second term can be expanded into Kato-smooth operators
\begin{align*}
&[\tilde{A}P,PK_hP]_\circ(\phi,\psi)\\
&\qquad=[A_hP,PK_hP]_\circ(\phi,\psi)+[B_hP,PK_hP]_\circ(\phi,\psi)\\
&\qquad=(F'Ph\phi,E'Ph\psi)+(FPhB_hP\phi,EPh\psi)-(EPh\phi,FPhB_hP\psi),
\end{align*}
and thus the sesquilinear form can be extended to a bounded operator in $\ch$ satisfying $\|[\tilde{A}P,PK_hP]\|\leq C(E,F,E',F')$.

We conclude that $D_\epsilon$ converges to a sesquilinear form that can be extended to a bounded operator in $\ch$.

\underline{Step 2}: Convergence as $t\to\infty$

Fix $\epsilon>0$ and for $\phi,\psi\in D(\tilde{A})$ denote $\phi_\epsilon=R_\epsilon\phi$, $\psi_\epsilon=R_\epsilon\psi$. Define the sesquilinear form $C_t(\phi,\psi):=(\tilde{A}\e^{\i tH}P\phi,B_h\e^{\i tH}P\psi)-(B_h\e^{\i tH}P\phi,\tilde{A}\e^{\i tH}P\psi)$. Note that $\frac{d}{dt}C_t(\phi_\epsilon,\bar{\psi}_\epsilon)=D_\epsilon(\e^{\i tH}\phi,\e^{\i tH}\psi)$. Now we use an estimate similar to \eqref{eq:U_Cauchy} to prove convergence in $t$.
\vspace{0.1cm}
\begin{align*}
&|(C_t-C_r)(\phi_\epsilon,\bar{\psi}_\epsilon)|\\
&\quad=\left|\int_r^t D_\epsilon(\e^{\i\tau H}\phi,\e^{\i\tau H}\psi)\d\tau\right|\\
&\quad\leq \int_r^t\left|\braket{P\phi_\epsilon}{\e^{\i\tau H}cK_{h^2}\e^{\i\tau H}P\psi_\epsilon}\right|\d\tau+\int_r^t\left|\braket{\phi}{[\tilde{A}P,PR_\epsilon K_hR_\epsilon P]\psi}\right|\d\tau\\
&\quad\leq C(F)\|\phi\|\left(\int_r^t\|E\e^{\i\tau H}h_sP\psi_\epsilon\|^2\d s\right)^{\mkern-8mu 1/2}\mkern-18mu+ C(F')\|\phi\|\left(\int_r^t\|E'\e^{\i\tau H}h_sP\psi_\epsilon\|^2\d s\right)^{\mkern-8mu 1/2}.
\end{align*}

By Kato-smoothness of the RHS we conclude that the sequence is Cauchy in $t$ (for fixed $\epsilon$) and moreover, $C_t(\phi_\epsilon,\psi_\epsilon)\to 0$ as $t\to\infty$.

We proceed analogously to prove the result of the theorem.
\begin{eqnarray*}
|C_0(\phi_\epsilon,\psi_\epsilon)| &\leq& \limsup_{t}\left\{|C_t(\phi,\psi)|+\int_0^t \left|D_\epsilon(\e^{\i\tau H}\phi,\e^{\i\tau H}\psi)\right|\d\tau\right\}\\
&\leq& C(E,F,E',F')\|\phi_\epsilon\|\|\psi_\epsilon\|.
\end{eqnarray*}
Since $R_\epsilon D(A)$ is dense in $\ch$, we conclude by letting $\epsilon\to 0$ that the sesquilinear form defined above $C_\circ(\phi,\psi)=(\tilde{A}P\phi,B_hP\psi)-(B_hP\phi,\tilde{A}P\psi)$ restricted to $D(\tilde{A})$ extends to a bounded operator on $\ch$. \qed

\end{proof}

\begin{corollary}
$B_h$ leaves invariant the domain of $\tilde{A}$, that is, $B_hD(\tilde{A})\subset D(\tilde{A})$.
\end{corollary}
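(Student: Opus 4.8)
The plan is to deduce the domain invariance from the boundedness of the commutator form established in Proposition \ref{prop:AB-BA}, using only the self-adjointness of $\tilde{A}$. The abstract mechanism is standard and resolvent-free: if $\tilde{A}$ is self-adjoint, $S$ is bounded and symmetric, and the form $[\tilde{A},S]_\circ(\phi,\psi)=(\tilde{A}\phi,S\psi)-(S\phi,\tilde{A}\psi)$ on $D(\tilde{A})$ extends to a bounded operator on $\ch$, then $SD(\tilde{A})\subset D(\tilde{A})$. Indeed, for fixed $\psi\in D(\tilde{A})$ and arbitrary $\phi\in D(\tilde{A})$ one has $(\tilde{A}\phi,S\psi)=(S\phi,\tilde{A}\psi)+[\tilde{A},S]_\circ(\phi,\psi)$, whence $|(\tilde{A}\phi,S\psi)|\le\big(\|S\|\,\|\tilde{A}\psi\|+\|[\tilde{A},S]\|\,\|\psi\|\big)\|\phi\|$; thus $\phi\mapsto(\tilde{A}\phi,S\psi)$ is $\ch$-continuous on $D(\tilde{A})$, and since $\tilde{A}=\tilde{A}^*$ this forces $S\psi\in D(\tilde{A})$. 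I will apply this with $S=B_h$, which is bounded and symmetric by construction.

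The work, then, is to identify the sandwiched form $C_\circ=P[\tilde{A},B_h]_\circ P$ of Proposition \ref{prop:AB-BA} with the genuine commutator form $[\tilde{A},B_h]_\circ$. First I record that $B_h=PB_hP$, equivalently $PB_h=B_hP=B_h$: since $P$ commutes with $H$ it commutes with both $\e^{\i sH}$ and $h_s(H)$, so each integrand $\e^{-\i sH}PK_hP\e^{\i sH}$ in the definition of $B_h$ has the form $P(\cdot)P$, and this property survives the strong limit. Using this together with the fact that $P$ leaves $D(\tilde{A})$ invariant (as is already implicit in the very formation of $\tilde{A}P\phi$ in $C_\circ$) and commutes with $\tilde{A}$ there, I can move the projections through: for $\phi,\psi\in D(\tilde{A})$,
\begin{align*}
C_\circ(\phi,\psi)&=(\tilde{A}P\phi,B_hP\psi)-(B_hP\phi,\tilde{A}P\psi)\\
&=(\tilde{A}\phi,B_h\psi)-(B_h\phi,\tilde{A}\psi)=[\tilde{A},B_h]_\circ(\phi,\psi).
\end{align*}
Hence $[\tilde{A},B_h]_\circ$ coincides with $C_\circ$ and, by Proposition \ref{prop:AB-BA}, extends to the bounded operator $C$ on $\ch$.

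With this identification the mechanism of the first paragraph applies verbatim with $S=B_h$ and $[\tilde{A},B_h]=C$: for every $\psi\in D(\tilde{A})$ the functional $\phi\mapsto(\tilde{A}\phi,B_h\psi)$ is bounded on $D(\tilde{A})$ for the $\ch$-topology, so $B_h\psi\in D(\tilde{A}^*)=D(\tilde{A})$. This is precisely the assertion $B_hD(\tilde{A})\subset D(\tilde{A})$.

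I expect the only genuinely delicate point to be the bookkeeping of the projection $P$ in the second paragraph, namely justifying $B_h=PB_hP$ together with the passage $C_\circ=[\tilde{A},B_h]_\circ$; this rests on $P$ commuting with $\tilde{A}$ (ultimately with $A$) and leaving $D(\tilde{A})$ invariant, which is the one structural input beyond Proposition \ref{prop:AB-BA} that must be pinned down carefully. Everything else is the soft self-adjointness argument and requires neither resolvent estimates nor regularization.
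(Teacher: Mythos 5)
Your abstract mechanism in the first paragraph is correct, and it is in fact slicker than the paper's own argument: for a bounded symmetric $S$ whose commutator form with $\tilde{A}$ is bounded on $D(\tilde{A})$, the estimate $|(\tilde{A}\phi,S\psi)|\le\big(\|S\|\,\|\tilde{A}\psi\|+\|[\tilde{A},S]\|\,\|\psi\|\big)\|\phi\|$ together with $D(\tilde{A}^*)=D(\tilde{A})$ gives $S\psi\in D(\tilde{A})$ with no regularization at all. (The paper instead sets $\tilde{A}_\epsilon=\tilde{A}(1+\i\epsilon\tilde{A})^{-1}$, proves the uniform bound $\|\tilde{A}_\epsilon B_h\psi\|\le C(\|\psi\|+\|\tilde{A}\psi\|)$, and concludes by Fatou.) The genuine gap is in your second paragraph, exactly at the point you yourself flagged as delicate: you identify $C_\circ$ with $[\tilde{A},B_h]_\circ$ by letting $P$ commute with $\tilde{A}$ on $D(\tilde{A})$, ``ultimately with $A$''. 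That commutation is not available. Assumption \ref{H} only makes $P$ commute with $H$; it is never assumed to commute with $A$. Hence $P$ commutes with $h_s(H)$, with $\e^{\i tH}$, and therefore with $B_h$ (your verification of $B_h=PB_hP$ is fine), but it does \emph{not} commute with $A_h$: formally $[P,A_h]=h_s(H)[P,A]h_s(H)$, which has no reason to vanish, so $[P,\tilde{A}]=[P,A_h]\neq 0$ in general. In the paper's applications this failure is real: $P$ is essentially $P_{\mathrm{ac}}(H)$ and $A$ is the dilation generator, which do not commute. What the paper uses, and all that is true, is that $[P,\tilde{A}]$ extends to a \emph{bounded} operator (asserted there as a consequence of $H\in C^1(\tilde{A})$). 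So your chain $C_\circ(\phi,\psi)=(\tilde{A}\phi,B_h\psi)-(B_h\phi,\tilde{A}\psi)$ is false as stated.

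The repair is cheap and preserves your soft argument. For $\phi,\psi\in D(\tilde{A})$ write $\tilde{A}P=P\tilde{A}+[\tilde{A},P]$ in both entries of $C_\circ$ and use $PB_h=B_hP=B_h$; this gives
\[
C_\circ(\phi,\psi)=[\tilde{A},B_h]_\circ(\phi,\psi)+([\tilde{A},P]\phi,B_h\psi)-(B_h\phi,[\tilde{A},P]\psi).
\]
Since $[\tilde{A},P]$ and $B_h$ are bounded, the two correction terms are bounded forms, so Proposition \ref{prop:AB-BA} still forces $[\tilde{A},B_h]_\circ$ to extend to a bounded operator on $\ch$, and your adjoint-domain mechanism then closes the proof with $S=B_h$. (Note also that the invariance $PD(\tilde{A})\subset D(\tilde{A})$, which you need even to write $C_\circ$, follows from the boundedness of $[P,\tilde{A}]$ by this same mechanism — not from any commutation.) With this one correction your proof is complete and genuinely different from the paper's: it uses the same structural inputs ($B_h=PB_hP$, boundedness of $[P,\tilde{A}]$, Proposition \ref{prop:AB-BA}) but dispenses with the resolvent regularization and the Fatou step.
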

\begin{proof}Let $\epsilon>0$ and $\psi\in D(\tilde{A})$, $\phi\in\ch$. Denote $R_\epsilon(\tilde{A})=(1+\i\epsilon \tilde{A})^{-1}$ and $\tilde{A}_\epsilon=\tilde{A}R_\epsilon(\tilde{A})$. Recall also that $PB_hP=B_h$ and $[P,\tilde{A}]$ extends to a bounded operator since $H\in C^1(\tilde{A})$.
\begin{align*}
&|\braket{\phi}{\tilde{A}_\epsilon B_h\psi}|\\
&\quad = |\braket{P\tilde{A}R_\epsilon(\tilde{A})^*\phi}{B_hP\psi}|\\
&\quad = |\braket{[P,\tilde{A}](1-\i\epsilon \tilde{A})^{-1}\phi}{B_hP\psi}+\braket{\tilde{A}P(1-\i\epsilon \tilde{A})^{-1}\phi}{B_hP\psi}|\\
&\quad\leq C\|\phi\|\|\psi\|+|\braket{(1-\i\epsilon \tilde{A})^{-1}\phi}{P[\tilde{A},B_h]P\psi}|+|\braket{B_hP(1-\i\epsilon \tilde{A})^{-1}\phi}{P\tilde{A}P\psi}|\\
&\quad\leq C\|\phi\|(\|\psi\|+\|\tilde{A}\psi\|),
\end{align*}
where the constant $C$ is independent of $\epsilon$. Thus, $\|\tilde{A}_\epsilon B_h\psi\|\leq C(\|\psi\|+\|\tilde{A}\psi\|)$ and we conclude by Fatou's lemma.\qed
\end{proof}

\section{Decay estimates}

\begin{definition}
For $u\in\mathcal{H}$, define the function $\psi_u(t):=\langle u,e^{\i tH}u\rangle$, $t\in\mathbb{R}$ and the set $$\ce=\big\{u\in \mathcal{H}: \psi_u\in L^2(\mathbb{R}) \big\}.$$
For $u\in\ce$ denote $[u]_H=\|\psi_u\|_{L^2_t}^{1/2}$.
\end{definition}

In \cite{ABG} it was shown that $\ce$ is a dense linear subspace of the absolutely continuity subspace of $H$ and $[\cdot]_H$ is a complete norm on it. In this work we will consider the following additional assumption on the space $\ce$, which will be relevant for Propositions \ref{prop:t_decay_1/2} and \ref{prop:t_decay_1/2_s=0} where no asssumption of positivity of $Q$ is made.

\begin{itemize}
\labitemi{(Hb)}{Hb} For any $u\in\ch$ one has $(A_h+\i)^{-1}u\in\ce$.
\end{itemize}

We now proceed to prove the main results of this work.

\begin{proposition}\label{prop:t_decay_one}
Assume \ref{H} with $Q\geq 0$. Then for $u\in\ch^s$ such that $Pu\in D(A)\cap D(Q^{1/2})$ we have the estimate $|\psi_{Q^{1/2}Pu}(t)|\leq C_u\jap{t}^{-1}$.
\end{proposition}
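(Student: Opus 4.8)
The plan is to reduce the estimate for $\psi_{Q^{1/2}Pu}$ to the propagation identity $P[\e^{\i tH},\tilde A]P=tPQ_hP\e^{\i tH}$ of the previous proposition, and then to split the bound into a large-$t$ regime controlled by that identity and a small-$t$ regime controlled by the positivity of $Q$. The cut-off operators $h_s(H)=\jap{H}^{-s/2}$ are used to transfer between $Q$ and the bounded operator $Q_h$.

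First I would pass to the cut-off operator $Q_h$. Set $v:=\jap{H}^{s/2}u$; the hypothesis $u\in\ch^s$ guarantees $v\in\ch$, while $Pu\in D(Q^{1/2})$ guarantees that $Q^{1/2}Pu$ is defined. Since $P$ and $Q$ commute with $H$, the operator $Q$ commutes with $\jap{H}^{-s}$, so $Q_h=h_s(H)Qh_s(H)=Q\jap{H}^{-s}$ and $Q_h^{1/2}=Q^{1/2}\jap{H}^{-s/2}$. A short computation using that $\e^{\i tH}$ and $\jap{H}^{\pm s/2}$ commute then gives
\begin{equation*}
\psi_{Q^{1/2}Pu}(t)=\langle Q^{1/2}Pu,\e^{\i tH}Q^{1/2}Pu\rangle=\langle Pv,Q_h\e^{\i tH}Pv\rangle,
\end{equation*}
because $Q_h^{1/2}Pv=Q^{1/2}\jap{H}^{-s/2}P\jap{H}^{s/2}u=Q^{1/2}Pu$. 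Thus it suffices to bound $\langle Pv,Q_h\e^{\i tH}Pv\rangle$.

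For the large-$t$ bound I would invoke the propagation identity. Since $Q_h$ is bounded, $PQ_hP\e^{\i tH}=\tfrac1t\,P[\e^{\i tH},\tilde A]P$ is an identity of bounded operators on $\ch$, and evaluating it at $Pv$ gives
\begin{equation*}
\langle Pv,Q_h\e^{\i tH}Pv\rangle=\frac1t\Big(\langle Pv,\e^{\i tH}\tilde A Pv\rangle-\langle\tilde A Pv,\e^{\i tH}Pv\rangle\Big).
\end{equation*}
The point is that $Pv\in D(\tilde A)$ with $\|\tilde A Pv\|<\infty$: writing $\tilde A=A_h+B_h$ and commuting $\jap{H}^{s/2}$ through $A_h=\jap{H}^{-s/2}A\jap{H}^{-s/2}$, one finds $A_hPv=\jap{H}^{-s/2}APu$, which is finite precisely because $Pu\in D(A)$, while $B_h$ is bounded. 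Cauchy--Schwarz then yields
\begin{equation*}
|\langle Pv,Q_h\e^{\i tH}Pv\rangle|\le\frac{2}{|t|}\,\|Pv\|\,\|\tilde A Pv\|.
\end{equation*}
For small $t$ I would use $Q\ge0$: then $Q_h=Q\jap{H}^{-s}\ge0$ commutes with $\e^{\i tH}$, so $|\langle Pv,Q_h\e^{\i tH}Pv\rangle|=|\langle Q_h^{1/2}Pv,\e^{\i tH}Q_h^{1/2}Pv\rangle|\le\|Q_h^{1/2}Pv\|^2=\|Q^{1/2}Pu\|^2$. Combining the two regimes via $\jap{t}^{-1}\simeq\min\{1,|t|^{-1}\}$ gives $|\psi_{Q^{1/2}Pu}(t)|\le C_u\jap{t}^{-1}$, with $C_u\simeq\max\{\|Q^{1/2}Pu\|^2,\ \|Pv\|\,\|\tilde A Pv\|\}$, where $\|Pv\|\le\|u\|_{\ch^s}$ and $\|\tilde A Pv\|\le\|APu\|+\|B_h\|\,\|u\|_{\ch^s}$.

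I expect the main obstacle to be the rigorous justification of splitting the commutator when evaluated at $Pv$, since $Pv$ need not lie in $\ch^1=D(H)$, whereas the propagation identity was originally identified on $\ch^1$. I would handle this by regularizing with $R_\epsilon=(1+\i\epsilon H)^{-1}$, which maps $\ch$ into $\ch^1$ and commutes with $H$, $P$, $Q$ and $\jap{H}^{\pm s/2}$: the identity and the splitting are classical for $R_\epsilon Pv\in\ch^1\cap D(\tilde A)$, and one passes to the limit $\epsilon\to0$ using $R_\epsilon\to I$ strongly together with $\tilde A R_\epsilon Pv\to\tilde A Pv$, the latter following from $H\in C^1(\tilde A)$ exactly as in the proof of Proposition \ref{prop:Atilde}. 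Everything else is bookkeeping with the functional calculus and the commutation of $P$, $Q$ and $\jap{H}^{\pm s/2}$ with $\e^{\i tH}$.
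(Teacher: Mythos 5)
Your proposal is correct and is essentially the paper's own proof: both pass to $v=\jap{H}^{s/2}u$, rewrite $t\psi_{Q^{1/2}Pu}(t)$ via the propagation identity $P[\e^{\i tH},\tilde{A}]P=tPQ_hP\e^{\i tH}$, bound the two resulting terms using $\tilde{A}=A_h+B_h$ with $A_hPv=\jap{H}^{-s/2}APu$ and $B_h$ bounded, and resolve the domain issue by $R_\epsilon=(1+\i\epsilon H)^{-1}$ regularization (the paper proves the bound for $u_\epsilon=R_\epsilon u$ uniformly in $\epsilon$, using Proposition \ref{prop:resolvent} and the $H$-boundedness of $Q$, then concludes by Fatou; your limit $\tilde{A}R_\epsilon Pv\to\tilde{A}Pv$ is the same mechanism). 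The only inaccurate side remark is that ``$Q_h$ is bounded'': in general only $Q_h\in B(\ch^1,\ch)$ is guaranteed (e.g.\ $Q=cH$ with $s=1/2$ gives $Q_h=cH\jap{H}^{-1/2}$, exactly the manifold application), so the propagation identity lives in $B(\ch^1,\ch)$ rather than $B(\ch)$; this is harmless in your argument because you interpret the pairing through $Q_h^{1/2}Pv=Q^{1/2}Pu$ and in any case justify the evaluation at $Pv$ by the $R_\epsilon$ regularization, which is the step that is genuinely needed.
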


\begin{proof} Assume first that $u\in\ch^1$ and define $v=\jap{H}^{s/2}u$.
\begin{eqnarray}\label{eq:Atildev}\nonumber
t\psi_{Q^{1/2}Pu}(t) &=&\braket{Q^{1/2}Pu}{t\e^{\i tH}Q^{1/2}Pu}\\\nonumber
&=&\braket{v}{tPQ_h\e^{\i tH}Pv}\\\nonumber
&=&\braket{v}{P[\e^{\i tH},\tilde{A}]Pv}\\
&=&\braket{\e^{-\i tH}Pv}{\tilde{A}Pv}-\braket{\tilde{A}Pv}{\e^{\i tH}Pv}.
\end{eqnarray}
We now expand the first term of the last expression, the second one is analogous.
\begin{eqnarray*}
|\braket{\e^{-\i tH}Pv}{\tilde{A}Pv}| &=&|\braket{\e^{-\i tH}Pv}{A_hPv}+\braket{\e^{-\i tH}Pv}{B_hPv}|\\
&\leq& \|u\|\|PAPu\|+C\|\jap{H}^{s/2}Pu\|.
\end{eqnarray*}

Hence $|t\psi_{Q^{1/2}Pu}(t)|\leq 2(\|u\|\|PAPu\|+C\|\jap{H}^{s/2}Pu\|)$.

For general $u$ we define $u_\epsilon:=R_\epsilon u\in\ch^1$. Note that $Pu_\epsilon\in D(Q^{1/2})$ since $Q$ commutes with $H$ and thus from \eqref{eq:Atildev} we obtain the estimate
\begin{equation}\label{eq:decay_1_eps}
|t\psi_{Q^{1/2}Pu_\epsilon}(t)|\leq 2\|v_\epsilon\|\|P\tilde{A}Pv_\epsilon\|.
\end{equation}
Since $H\in C^1(\tilde{A})$, from Proposition 12 in \cite{GLS} we obtain that $PR_\epsilon v\in D(\tilde{A})$. Moreover, Proposition \ref{prop:resolvent} implies that $[\tilde{A},R_\epsilon]=\epsilon R_\epsilon[H,\i \tilde{A}]R_\epsilon$, hence $P[\tilde{A},R_\epsilon]P=\epsilon PR_\epsilon QR_\epsilon P$ as operators in $\ch$. Now we use that $Q$ is $H$-bounded with relative norm $a$, which yields
\begin{eqnarray*}
\|P\tilde{A}Pv_\epsilon\| &=& \|P[\tilde{A},R_\epsilon]Pv\|+\|PR_\epsilon \tilde{A}Pv\|\\
&\leq& \epsilon\left(a\|HR_\epsilon Pv\|+b\|R_\epsilon Pv\|\right)+\|\tilde{A}Pv\|\\
&\leq& \left(a+\epsilon b\right)\|Pv\|+\|PAPu\|+C\|Pv\|\\
&\leq& C\|\jap{H}^{s/2}Pu\|+\|PAPu\|.
\end{eqnarray*}
Finally, let $\epsilon\to 0$ and use Fatou's lemma on the lhs side of \eqref{eq:decay_1_eps} to conclude
\begin{eqnarray*}
|t\psi_{Q^{1/2}Pu}(t)|&\leq& 2\|\jap{H}^{s/2}u\|\big(C\|\jap{H}^{s/2}Pu\|+\|PAPu\|\big).
\end{eqnarray*}\qed
\end{proof}

\begin{proposition}\label{prop:t_decay_1/2}
Assume \ref{H}, \ref{Ha} and \ref{Hb} in the special case $Q=cH$ and $s=1/2$. Then for $u\in D(A)$ such that $Pu$ and $PAPu$ are in $\ce$, one has $|\psi_{Pu}(t)|\leq C_u\jap{t}^{-1/2}$.
\end{proposition}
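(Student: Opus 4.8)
The plan is to reduce the pointwise bound to a square-integrability statement about $t\,\psi_{Pu}'$ and then to close with a soft differential-inequality argument. First I would note that, since $\psi_{Pu}'(t)=\i\braket{Pu}{H\e^{\i tH}Pu}$ and $Q=cH$, one has $\psi_{Pu}'(t)=\tfrac{\i}{c}\braket{Pu}{Q\e^{\i tH}Pu}$. Setting $v=\jap{H}^{1/4}u$, so that $Pv=\jap{H}^{1/4}Pu$ and $Q=\jap{H}^{1/4}Q_h\jap{H}^{1/4}$ with $Q_h=c\jap{H}^{-1/2}H$ for $s=1/2$, the identity $P[\e^{\i tH},\tilde{A}]P=tPQ_hP\e^{\i tH}$ of the proposition following Proposition~\ref{prop:Atilde} gives, exactly as in \eqref{eq:Atildev},
\[
t\,\psi_{Pu}'(t)=\frac{\i}{c}\,G(t),\qquad G(t):=\braket{\e^{-\i tH}Pv}{\tilde{A}Pv}-\braket{\tilde{A}Pv}{\e^{\i tH}Pv}.
\]
Thus the whole proposition reduces to the claim that $G\in L^2(\R)$.

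Granting this, the conclusion follows from the elementary identity $\tfrac{\d}{\d t}\big(t|\psi_{Pu}(t)|^2\big)=|\psi_{Pu}(t)|^2+2\,\mathrm{Re}\big(\overline{\psi_{Pu}(t)}\,t\psi_{Pu}'(t)\big)$. Integrating from $0$ to $T$ and using $Pu\in\ce$ (whence $\psi_{Pu}\in L^2(\R)$ with $\|\psi_{Pu}\|_{L^2}=[Pu]_H^2$) together with the Cauchy--Schwarz bound $\big|\int_0^T\overline{\psi_{Pu}}\,t\psi_{Pu}'\big|\le\tfrac1{|c|}\|\psi_{Pu}\|_{L^2}\|G\|_{L^2}$, one obtains $T|\psi_{Pu}(T)|^2\le C_u$ uniformly in $T$, hence $|\psi_{Pu}(T)|\le C_u^{1/2}T^{-1/2}$; combining with the trivial bound $|\psi_{Pu}(t)|\le\|Pu\|^2$ for $|t|\le1$ and the analogous estimate for $t<0$ yields $|\psi_{Pu}(t)|\le C_u\jap{t}^{-1/2}$. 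It is essential here that $G$ be \emph{square integrable} and not merely bounded: a bounded $G$ would only produce the weaker rate $\jap{t}^{-1/4}$.

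To prove $G\in L^2$ I would split $\tilde{A}=A_h+B_h$. In the $A_h$-contribution the weights $\jap{H}^{\pm1/4}$ cancel against those in $v$, leaving (up to time reversal) the cross-correlations $\pm\braket{PAPu}{\e^{\i tH}Pu}$; since both $PAPu$ and $Pu$ lie in $\ce$, polarisation shows these belong to $L^2(\R)$. For the $B_h$-contribution I would use that $B_h$ is symmetric to recognise the relevant piece as $\braket{Pv}{[\e^{\i tH},B_h]Pv}$, then invoke the Duhamel formula of the preceding proposition together with the identity $[H,\i B_h]_\circ=-PK_hP$ from the proof of Proposition~\ref{prop:Atilde}. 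After the weights again cancel, this reduces the $B_h$-term to the truncated correlation $-\int_0^t\braket{F\e^{\i(s-t)H}Pu}{E\e^{\i sH}Pu}\,\d s$ of the families $s\mapsto E\e^{\i sH}Pu$ and $s\mapsto F\e^{\i sH}Pu$, which are in $L^2(\R;\ch)$ by the $|H|^{1/2}$-smoothness of $E,F$; assumption \ref{Hb} is then what upgrades this to genuine $L^2(\R)$-control of $G$.

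The main obstacle is entirely in the $B_h$-term, and it is twofold. First, the vector $v=\jap{H}^{1/4}u$ and the operators acting on it need not be well defined for $u$ satisfying only the stated hypotheses, so the identities above must be run for the regularisations $u_\epsilon=R_\epsilon u\in\ch^1$ and the resulting bounds shown to be uniform in $\epsilon$ before passing $\epsilon\to0$ by Fatou, exactly as in the final step of Proposition~\ref{prop:t_decay_one}; the uniformity is supplied by the $\ce$-norms of $Pu$ and $PAPu$ and by \ref{Hb}. Second, and more delicate, is establishing that the Kato-smooth correlation entering the $B_h$-term is genuinely square integrable in $t$ rather than merely bounded, since it is precisely this square integrability that yields the $\jap{t}^{-1/2}$ rate; securing it uniformly in $\epsilon$ is the crux of the argument.
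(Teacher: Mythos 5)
Your overall skeleton does match the paper's: the elementary differential-inequality lemma you derive is exactly Corollary 8.2 of \cite{GLS}, the identity $t\psi_{Pu}'(t)=\tfrac{\i}{c}G(t)$ is the paper's starting computation, and the $A_h$-piece of $G$ is handled there, as by you, as a cross-correlation of $Pu$ with $PAPu$ (after an energy cutoff), which lies in $L^2(\R)$ by polarization in $\ce$. The genuine gap is the $B_h$-piece, which you yourself call the crux and leave open; moreover the route you sketch for it cannot be closed. First, your Duhamel reduction leaves the truncated correlation $W(t)=\int_0^t\braket{F\e^{\i(s-t)H}Pu}{E\e^{\i sH}Pu}\,\d s$, and even if $s\mapsto E\e^{\i sH}Pu$ and $s\mapsto F\e^{\i sH}Pu$ were in $L^2(\R;\ch)$, this is a convolution-type expression: Cauchy--Schwarz makes it bounded, but a convolution of two $L^2$ functions need not be in $L^2$, and the truncation does not improve this. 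Assumption \ref{Hb} cannot repair it either: \ref{Hb} is a statement about $(A_h+\i)^{-1}$ mapping $\ch$ into $\ce$, and $(A_h+\i)^{-1}$ never enters your Duhamel computation. Second, the premise itself fails under the stated hypotheses: $|H|^{1/2}$-smoothness of $E,F$ applies only to vectors of $\dot{H}^{1/2}(M)$, and after the weights cancel ($h_s(H)Pv=Pu$) you would have to apply it to $Pu$ itself, whereas you only know $u\in D(A)$ and $Pu,PAPu\in\ce$. The $R_\epsilon$-regularisation does not rescue this: $\|R_\epsilon Pu\|_{\dot{H}^{1/2}}$ is not uniformly bounded as $\epsilon\to 0$ unless $Pu$ already lies in $H^{1/2}$, so no Fatou argument can close the limit.

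The paper closes both holes with two devices that are absent from your proposal. For the $B_h$-piece it does not use Duhamel at all: for $v\in D(A_h)$ it writes $B_hPv=(A_h+\i)^{-1}\bigl([A_h,B_h]Pv+B_h(A_h+\i)Pv\bigr)$, observes that the bracket is a vector of $\ch$ by Proposition \ref{prop:AB-BA} (this is precisely what that proposition was proved for), and then invokes \ref{Hb} --- this is its actual role --- to conclude that $(A_h+\i)^{-1}$ of that vector lies in $\ce$. The $B_h$-piece of $G$ is then, exactly like the $A_h$-piece, a cross-correlation of two vectors of $\ce$, hence in $L^2(\R)$, with no smoothness of $E,F$ applied to $Pu$ anywhere. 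And to make $v=\jap{H}^{1/4}u$ legitimate the paper does not regularise; it splits $u=\chi_{[-M,1]}(H)u+(1-\chi_{[-M,1]}(H))u$. On the low-energy part $\jap{H}^{1/4}\chi(H)$ is a bounded function of $H$, so $v$ exists and the $L^2_t$ argument applies, giving $\jap{t}^{-1/2}$; on the high-energy part $|H|^{-1/2}\jap{H}^{1/2}(1-\chi(H))$ is bounded, and a direct commutator estimate in the spirit of Proposition \ref{prop:t_decay_one} gives the pointwise bound $\jap{t}^{-1}$ with no $L^2_t$ argument whatsoever. This low/high energy splitting, not an $\epsilon\to 0$ limit, is what handles data outside $H^{1/2}$; without it (or some substitute) your argument does not produce a finite constant $C_u$ for general $u\in D(A)$.
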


\begin{proof} Define $\chi=\chi_{[-M,1]}$ the characteristic function of the interval in $\R$. We decompose $u:=u_1+u_2$, where $u_1=\chi(H)u$ and $u_2=(1-\chi(H))u$. Note that $\psi_{Pu}=\psi_{Pu_1}+\psi_{Pu_2}$. We will show that $\psi_{u_1}=O(t^{-1/2})$ and $\psi_{u_2}=O(t^{-1})$.

By Corollary 8.2 in \cite{GLS} it suffices to prove that $\delta\psi_{Pu_1}(t):=t\psi_{Pu_1}'(t)$ is in $L^2(\R)$. Define $v=\jap{H}^{s/2}u_1$.
\begin{eqnarray}\label{eq:Atildev2}\nonumber
\i ct\psi_{Pu_1}'(t) &=& \braket{Pu_1}{tcH\e^{\i tH}Pu_1}\\\nonumber
&=& \braket{Pv}{t\jap{H}^{-s/2}cH\jap{H}^{-s/2}\e^{\i tH}Pv}\\\nonumber
&=& \braket{Pv}{tH_he^{\i tH}Pv}\\\nonumber
&=& \braket{v}{P[\e^{\i tH},\tilde{A}]Pv}\\\nonumber
&=& \braket{\e^{-\i tH}v}{P\tilde{A}Pv}-\braket{P\tilde{A}Pv}{\e^{\i tH}v}.
\end{eqnarray}
We now expand the first term of the last expression, the second one is analogous.
\begin{eqnarray*}
\braket{\e^{-\i tH}v}{P\tilde{A}Pv} &=& \braket{\e^{-\i tH}v}{PA_hPv}+\braket{\e^{-\i tH}v}{PB_hPv}\\
&=&\braket{\e^{-\i tH}u_1}{PAPu_1}+\braket{\e^{-\i tH}Pv}{(A_h+\i)^{-1}(A_h+\i)B_hPv}.
\end{eqnarray*}
Note that $(A_h+\i)B_hPv=[A_h,B_h]Pv+B_h(A_h+\i)Pv$, which is in $\ch$ by Proposition \ref{prop:AB-BA} and the fact that $v\in D(A_h)$.

Thus $$c\|\delta\psi_{Pu_1}\|\leq 2[Pu_1]_H\big([PAPu_1]_H+[(A+\i)^{-1}(CP\jap{H}^{s/2}u_1+B_h\jap{H}^{-s/2}APu_1)]_H\big),$$ and we conclude $|\psi_{u_1}(t)|\leq C_{u_1}\jap{t}^{-1/2}$.

To estimate the decay of $u_2$ we now consider $v\in\ch$ such that $u_2=|H|^{1/2}\jap{H}^{-1/2}v$. Then
\begin{eqnarray}\label{eq:Atildev3}\nonumber
ct\psi_{Pu_2}(t)&=&\braket{Pu_2}{ct\e^{\i tH}Pu_2}\\\nonumber
&=&\braket{Pv}{ct\e^{\i tH}H_h\,\textrm{sgn}(H)Pv}\\
&=&\braket{v}{P[\e^{\i tH},\tilde{A}]\,\textrm{sgn}(H)Pv}.
\end{eqnarray}
Note that $g(H):=|H|^{-1/2}\jap{H}^{1/2}(1-\chi(H))$ is a bounded smooth function, so $\|v\|\leq\|u\|$ and $P\tilde{A}g(H)P=g'(H)H_hP+Pg(H)\tilde{A}P$. Hence the first term of the commutator in \eqref{eq:Atildev3} is bounded  $|\braket{\e^{-\i tH}v}{P\tilde{A}Pv}|\leq\|u\|\big(\|u\|+\|\tilde{A}Pu\|\big)$ and we conclude that $|\psi_{Pu_2}(t)|\leq C_u\jap{t}^{-1}$ as desired.
\qed
\end{proof}

\begin{remark}
In case assumptions \ref{H}, \ref{Ha} and \ref{Hb} are met with $s=0$, the construction of the conjugate operator is simpler because there is no need to introduce the cut-off $h_s$ (and then $\tilde{A}=A+B$). Hence Propositions \ref{prop:t_decay_one} and \ref{prop:t_decay_1/2} remain valid. We state them here for completeness, the proofs are analogous.
\end{remark}

\begin{proposition}
Assume \ref{H} with $Q\geq 0$ and $s=0$. Then for $u\in\ch$ such that $Pu\in D(A)\cap D(Q^{1/2})$ we have the estimate $|\psi_{Q^{1/2}Pu}(t)|\leq C_u\jap{t}^{-1}$.
\end{proposition}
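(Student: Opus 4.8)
\emph{Proof proposal.} The plan is to reproduce the proof of Proposition~\ref{prop:t_decay_one} verbatim, observing that when $s=0$ the cut-off $h_s(H)$ is the identity, so that $v:=\jap{H}^{s/2}u=u$, $Q_h=Q$, $A_h=A$, and $\tilde A=A+B$ with $B=B_h$ bounded by the estimate \eqref{eq:U_Cauchy}. As there, I would first prove the bound for $u\in\ch^1$ and then remove the restriction by a regularization in $H$.

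Suppose first $u\in\ch^1$. Since $Q\geq 0$ is self-adjoint and commutes with $H$, the root $Q^{1/2}$ commutes with $\e^{\i tH}$ and $Pu\in\ch^1\subset D(Q^{1/2})$, whence
\[
t\psi_{Q^{1/2}Pu}(t)=\braket{Pu}{tQ\e^{\i tH}Pu}=\braket{u}{P[\e^{\i tH},\tilde A]Pu},
\]
the second equality being the propagation identity $P[\e^{\i tH},\tilde A]P=tPQP\e^{\i tH}$ established above; this is valid in $B(\ch^1,\ch)$, which is why I take $u\in\ch^1$ at this stage. Expanding the commutator as a form and using $P\e^{\i tH}=\e^{\i tH}P$ gives
\[
t\psi_{Q^{1/2}Pu}(t)=\braket{\e^{-\i tH}Pu}{\tilde A Pu}-\braket{\tilde A Pu}{\e^{\i tH}Pu}.
\]
Bounding each term by $\|u\|\,\|P\tilde A Pu\|$ and using $\tilde A=A+B$ with $Pu\in D(A)$ and $B$ bounded yields $|t\psi_{Q^{1/2}Pu}(t)|\leq 2\|u\|\big(\|PAPu\|+C\|u\|\big)$.

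For general $u\in\ch$ with $Pu\in D(A)\cap D(Q^{1/2})$, put $u_\epsilon:=R_\epsilon u\in\ch^1$ with $R_\epsilon=(1+\i\epsilon H)^{-1}$; then $Pu_\epsilon\in D(Q^{1/2})$ and the previous step gives $|t\psi_{Q^{1/2}Pu_\epsilon}(t)|\leq 2\|u_\epsilon\|\,\|P\tilde A Pu_\epsilon\|$. The key point is a bound on $\|P\tilde A Pu_\epsilon\|$ uniform in $\epsilon$. Commuting $P$ through $R_\epsilon$ I write $P\tilde A Pu_\epsilon=R_\epsilon P\tilde A Pu+P[\tilde A,R_\epsilon]Pu$; the first summand is controlled by $\|P\tilde A Pu\|\leq\|PAPu\|+C\|u\|$, while for the second I invoke Proposition~\ref{prop:resolvent} in the form $P[\tilde A,R_\epsilon]P=\epsilon PR_\epsilon QR_\epsilon P$ (here Proposition~\ref{prop:Atilde} gives $P[H,\i\tilde A]P=PQP$) and combine the relative bound $\|Q\psi\|\leq a\|H\psi\|+b\|\psi\|$ with $\|\epsilon HR_\epsilon\|\leq 1$ to obtain $\|P[\tilde A,R_\epsilon]Pu\|\leq(a+\epsilon b)\|Pu\|$. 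Hence $\|P\tilde A Pu_\epsilon\|\leq\|PAPu\|+C\|u\|+(a+\epsilon b)\|Pu\|$ uniformly for $0<\epsilon\leq 1$. Letting $\epsilon\to 0$, with $Q^{1/2}Pu_\epsilon=R_\epsilon Q^{1/2}Pu\to Q^{1/2}Pu$ so that $\psi_{Q^{1/2}Pu_\epsilon}(t)\to\psi_{Q^{1/2}Pu}(t)$ (or via Fatou's lemma), gives $|t\psi_{Q^{1/2}Pu}(t)|\leq C_u$. Combining this with the trivial uniform bound $|\psi_{Q^{1/2}Pu}(t)|\leq\|Q^{1/2}Pu\|^2$ yields $|\psi_{Q^{1/2}Pu}(t)|\leq C_u\jap{t}^{-1}$.

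The only real obstacle---handled exactly as in Proposition~\ref{prop:t_decay_one}---is the uniform-in-$\epsilon$ control of $\|P\tilde A Pu_\epsilon\|$: one must commute $\tilde A$ past the regularizer $R_\epsilon$ and then absorb the remainder $\epsilon PR_\epsilon QR_\epsilon P$ using only the \emph{relative} $H$-boundedness of $Q$, which is precisely where the contraction $\|\epsilon HR_\epsilon\|\leq 1$ enters. Every remaining step is the mechanical substitution $h_s(H)=\id$ into the $s>0$ argument, so no new ideas are needed.
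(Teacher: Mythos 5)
Your proposal is correct and follows exactly the route the paper intends: the paper itself gives no separate proof for the $s=0$ case, stating only that the argument of Proposition~\ref{prop:t_decay_one} carries over with the cut-off $h_s$ replaced by the identity, which is precisely the substitution you carry out (including the regularization $u_\epsilon=R_\epsilon u$, the identity $P[\tilde A,R_\epsilon]P=\epsilon PR_\epsilon QR_\epsilon P$ from Proposition~\ref{prop:resolvent}, and the absorption of the remainder via $H$-boundedness of $Q$ and $\|\epsilon HR_\epsilon\|\leq 1$). Your added care about the limit $\psi_{Q^{1/2}Pu_\epsilon}(t)\to\psi_{Q^{1/2}Pu}(t)$ and the trivial bound for small $|t|$ only makes explicit what the paper leaves implicit.
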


\begin{proposition}\label{prop:t_decay_1/2_s=0}
Assume \ref{H}, \ref{Ha} and \ref{Hb} in the special case $Q=cH$ and $s=0$. Then for $u\in D(A)$ such that $Pu$ and $PAPu$ are in $\ce$, one has $|\psi_{Pu}(t)|\leq C_u\jap{t}^{-1/2}$.
\end{proposition}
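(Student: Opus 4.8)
The plan is to repeat the argument of Proposition \ref{prop:t_decay_1/2} verbatim, exploiting the simplification recorded in the preceding remark: when $s=0$ the cut-off $h_s(H)$ equals the identity, so $X_h=X$ for every operator, the modified conjugate operator is simply $\tilde{A}=A+B$ with $B$ the bounded operator built in Section 4, and the exponential commutation relation specializes to $P[\e^{\i tH},\tilde{A}]P=tc\,PHP\e^{\i tH}$ (since $Q_h=Q=cH$). As before I would split by energy: fix $\chi=\chi_{[-M,1]}$, set $u=u_1+u_2$ with $u_1=\chi(H)u$, $u_2=(1-\chi(H))u$, so that $\psi_{Pu}=\psi_{Pu_1}+\psi_{Pu_2}$, and establish $\psi_{Pu_1}=O(t^{-1/2})$ and $\psi_{Pu_2}=O(t^{-1})$ separately. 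Here $Pu_1$ and $PAPu_1$ remain in $\ce$ exactly as in Proposition \ref{prop:t_decay_1/2}, since $\ce$ is stable under the bounded cut-off $\chi(H)$ and its $C^1(A)$-commutator corrections.

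For the low-energy piece, by Corollary 8.2 in \cite{GLS} it suffices to show $\delta\psi_{Pu_1}(t):=t\psi_{Pu_1}'(t)\in L^2(\R)$. Differentiating and inserting the commutator identity gives $\i ct\psi_{Pu_1}'(t)=\braket{u_1}{P[\e^{\i tH},\tilde{A}]Pu_1}=\braket{\e^{-\i tH}u_1}{P\tilde{A}Pu_1}-\braket{P\tilde{A}Pu_1}{\e^{\i tH}u_1}$. Writing $\tilde{A}=A+B$, the $A$-contribution is $\braket{\e^{-\i tH}u_1}{PAPu_1}$, whose $L^2_t$-norm is $[Pu_1]_H\,[PAPu_1]_H$ and is finite precisely because $Pu_1,PAPu_1\in\ce$. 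For the $B$-contribution I would insert $(A+\i)^{-1}(A+\i)$ and use $(A+\i)BPu_1=[A,B]Pu_1+B(A+\i)Pu_1$, a vector of $\ch$ by Proposition \ref{prop:AB-BA} (whose statement, with $B_h=B$ here, is exactly the boundedness of $C=P[\tilde{A},B]P$); assumption \ref{Hb} then guarantees that $(A+\i)^{-1}$ of the resulting vector lies in $\ce$, so this term is again in $L^2_t$. This yields $|\psi_{Pu_1}(t)|\leq C_{u_1}\jap{t}^{-1/2}$.

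For the high-energy piece I would use that $H$ is boundedly invertible on $\Ran(1-\chi(H))$, since $|H|$ is bounded below on the support of $1-\chi$. Setting $v=|H|^{-1/2}(1-\chi(H))u$, which is bounded with $\|v\|\leq\|u\|$, one has $Pu_2=P|H|^{1/2}v$; moving $|H|^{1/2}$ through $P$ and factoring $|H|=H\,\textrm{sgn}(H)$ gives
$$ct\psi_{Pu_2}(t)=\braket{v}{P[\e^{\i tH},\tilde{A}]\,\textrm{sgn}(H)Pv}.$$
Commuting $\tilde{A}$ past the bounded smooth function $|H|^{-1/2}(1-\chi(H))$ of $H$, which is permissible since $H\in C^1(\tilde{A})$, and expanding the commutator, both resulting terms are bounded uniformly in $t$ by $\|u\|\big(\|u\|+\|\tilde{A}Pu\|\big)$; hence $|\psi_{Pu_2}(t)|\leq C_u\jap{t}^{-1}$. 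Combining the two estimates produces the claimed $\jap{t}^{-1/2}$ rate.

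The algebra of the commutator expansions is routine and identical to the $s=1/2$ computation with all cut-offs removed. The genuine content, and the only delicate step, is the control of the $B$-term in the low-energy estimate: because no positivity of $Q$ is available, one cannot simply discard or absorb this term and must instead give meaning to $(A+\i)B$ through Proposition \ref{prop:AB-BA} and then return to $\ce$ via the auxiliary hypothesis \ref{Hb}. This is the one place where all three assumptions \ref{H}, \ref{Ha}, \ref{Hb} are used simultaneously.
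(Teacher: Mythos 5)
Your proof is correct and follows exactly the route the paper intends: the paper's own justification for this proposition is just that the argument of Proposition \ref{prop:t_decay_1/2} carries over once $h_s=\mathrm{Id}$, $\tilde{A}=A+B$ and $Q_h=cH$, and that is precisely the adaptation you carried out (same energy splitting by $\chi_{[-M,1]}$, same reduction via Corollary 8.2 of \cite{GLS}, same $(A+\i)^{-1}(A+\i)$ insertion controlled by Proposition \ref{prop:AB-BA} together with \ref{Hb}, same substitution $u_2=|H|^{1/2}v$ at high energy). The only loose points (e.g.\ membership of $PAPu_1$ in $\ce$ after the sharp cut-off, domain bookkeeping for $\tilde{A}$) are present to the same degree in the paper's own $s=1/2$ proof, so they are not gaps introduced by you.
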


We now study the particular case  $H=H_0+V(x)$ on $\ch=L^2(\R^n)$, where $H_0$ is a self-adjoint operator and $V(x)$ is smooth real-valued function. Let $P$ be a projection that commutes with $H$. Let us consider the following assumptions.

\begin{itemize}
\labitemi{(H1)}{H1}
\begin{itemize}
\item[(i)] There is a self-adjoint first-order operator $A$ so that $H_0$ is of class $C^1(A)$ and $[H_0,\i A]=cH_0$ for some $c\neq 0$
\item[(ii)] The functions $V(x)$ and $w(x):=[V(x),\i A]$ are bounded
\end{itemize}
\end{itemize}

Conditions \ref{H1} ensure that $H$ is of class $C^1(A)$ and it follows that the sesquilinear form $[H,\i A]_\circ=cH-cV(x)+w(x)$ on $D(H)\cap D(A)$ can be identified with an operator $[H,\i A]$ in $B(\ch^1,\ch)$. In order to satisfy assumptions \ref{H} with $Q=cH$ and $K=-cV(x)+w(x)$ it remains to show the Kato-smoothness condition. We can derive this from decay estimates as follows.

Let $\sigma>0$ and $P_{\text{ac}}(H)$ be the projection onto the space of absolute continuity of $H$. Consider the local decay estimate
\begin{equation}\label{eq:LC}
\int_{\R}\|\jap{x}^{-\sigma/2}\e^{-\i tH}P_{\text{ac}}u\|^2\d t\leq C\|u\|^2,
\end{equation}
for all $u\in\ch$ and some $C>0$. Then from Proposition \ref{prop:t_decay_1/2} we obtain the following result.

\begin{proposition}
Let $H=H_0+V(x)$ and $A$ as in \ref{H1}, with $V$ such that $$\sup_{x\in\R^n}\left(\jap{x}^\sigma|V(x)|+\jap{x}^{\sigma+1}|\nabla V(x)|\right)<\infty.$$ Assume also the local decay estimate \eqref{eq:LC}.  Then for $u\in D(A)$ such that $P_\text{ac}AP_\text{ac}u$ is in $\ce$, then $|\psi_{P_{\text{ac}}u}(t)|\leq C_u\jap{t}^{-1/2}$.
\end{proposition}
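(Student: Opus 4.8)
The plan is to recognize this statement as the concrete realization of Proposition \ref{prop:t_decay_1/2} with the data $P=P_{\text{ac}}(H)$, $Q=cH$ and $K=-cV(x)+w(x)$, where $w=[V,\i A]$. Thus the whole proof reduces to checking that hypotheses \ref{H}, \ref{Ha} and \ref{Hb} hold in this setting (with $s=1/2$), after which the bound $|\psi_{P_{\text{ac}}u}(t)|\le C_u\jap{t}^{-1/2}$ follows verbatim from that proposition. Since $P_{\text{ac}}$ and $cH$ both commute with $H$, the structural requirements on $P$ and $Q$ are immediate, so the work is entirely in the three smoothness/commutator conditions.

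\emph{Verifying \ref{H}.} Condition \ref{H1} already gives $H\in C^1(A)$ and identifies the form $[H,\i A]_\circ=cH-cV(x)+w(x)$ with an element of $B(\ch^1,\ch)$, so the relation $P[H,\i A]P=P(cH+K)P$ holds. I would then use the decay hypotheses to bound $K$ pointwise: $|V(x)|\le C\jap{x}^{-\sigma}$ directly, and since $A$ is first order (coefficients growing at most like $\jap{x}$) one gets $|w(x)|\le C\jap{x}\,|\nabla V(x)|\le C\jap{x}^{-\sigma}$. Hence $K$ is multiplication by a real bounded function with $|K(x)|\le C\jap{x}^{-\sigma}$, so $\jap{H}^{-s/2}K\jap{H}^{-s/2}$ is bounded. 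I would factor $K=F^*E$ with $F=\jap{x}^{-\sigma/2}$ and $E=\jap{x}^{\sigma/2}K$, both being $\jap{x}^{-\sigma/2}$ times a bounded multiplication operator. The required $|H|^{1/2}$-smoothness of $E,F$ on $\Ran P_{\text{ac}}$ is exactly where \eqref{eq:LC} enters: that estimate says precisely that $\jap{x}^{-\sigma/2}$ is $H$-smooth on $\Ran P_{\text{ac}}$, and composing with the bounded cut-off $h_s(H)=\jap{H}^{-1/4}$ (using $\||H|^{s/2}\jap{H}^{-s/2}\|\le 1$) produces exactly the bounds $\int\|E\e^{\i tH}h_s(H)P\psi\|^2\,\d t\le C\|\psi\|^2$ invoked throughout the abstract estimates; the same applies to $F$.

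\emph{Verifying \ref{Ha}, \ref{Hb} and the $\ce$-membership.} For \ref{Hb}, and for the requirement $P_{\text{ac}}u\in\ce$ hidden in Proposition \ref{prop:t_decay_1/2} (the statement only assumes $P_{\text{ac}}AP_{\text{ac}}u\in\ce$), I would again lean on \eqref{eq:LC}: writing $\psi_{P_{\text{ac}}u}(t)=\braket{\jap{x}^{\sigma/2}P_{\text{ac}}u}{\jap{x}^{-\sigma/2}\e^{\i tH}P_{\text{ac}}u}$ and applying Cauchy--Schwarz shows $\psi_{P_{\text{ac}}u}\in L^2(\R)$ whenever $u$ lies in the weighted space $\jap{x}^{\sigma/2}\ch$, which controls membership in $\ce$; the same device handles $(A_h+\i)^{-1}u\in\ce$. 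The genuinely delicate point, and what I expect to be the main obstacle, is \ref{Ha}: one must show $P[A,K]_\circ P$ factors as $(F'P\,\cdot\,,E'P\,\cdot\,)$ with $E',F'$ again $|H|^{1/2}$-smooth. Computing $[A,K]=-c[A,V]+[A,w]$, the first term is harmless, since $[A,V]$ is multiplication by $\sim A(V)\sim\jap{x}^{-\sigma}$ and needs only $\nabla V$; but $[A,w]=[A,[V,\i A]]$ is multiplication by a function involving the \emph{second} derivatives of $V$ (schematically $\jap{x}^2\,\nabla^2 V$ for the dilation generator), which is not controlled by the stated hypotheses bounding only $V$ and $\nabla V$. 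To close the argument one needs the natural next-order decay $\jap{x}^{\sigma+2}|\nabla^2 V|<\infty$, after which the $\jap{x}^{-\sigma/2}$-factorization and \eqref{eq:LC}-based smoothness used for $K$ apply verbatim to $K'=P[A,K]_\circ P$. With \ref{H}, \ref{Ha} and \ref{Hb} established, Proposition \ref{prop:t_decay_1/2} yields the conclusion.
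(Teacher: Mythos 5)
Your proposal follows exactly the paper's route: the paper offers no separate proof of this proposition, but simply derives hypotheses \ref{H} from \ref{H1}, the pointwise decay of $V$ and $\nabla V$ (which makes $K=-cV+w$ a multiplication operator bounded by $C\jap{x}^{-\sigma}$, factorable through the weights $\jap{x}^{\mp\sigma/2}$), and the local decay estimate \eqref{eq:LC}, and then invokes the abstract $\jap{t}^{-1/2}$ proposition; your verification of \ref{H} is the same argument the paper carries out explicitly for the critical-decay application.

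Two of your side remarks deserve emphasis, since they are points where the paper is silently imprecise rather than places where your argument fails. First, \eqref{eq:LC} bounds the time integral by $\|u\|^2$, i.e.\ it is an $H$-smoothness ($s=0$) statement; it does not literally give $|H|^{1/2}$-smoothness (low-energy data have small $\dot{H}^{1/2}$ norm but order-one $L^2$ norm), so the clean reduction is to take $s=0$ and cite Proposition \ref{prop:t_decay_1/2_s=0} rather than your cut-off work-around for $s=1/2$: your observation that the only bounds actually used in the abstract machinery carry the cut-off $h_s(H)$ is correct, but choosing $s=0$ makes it unnecessary. Second, you are right that the stated hypotheses do not yield \ref{Ha} (which would require control of $\jap{x}^{\sigma+2}|\nabla^2V|$, since $[A,w]$ involves second derivatives of $V$), nor \ref{Hb}, nor the membership $P_{\mathrm{ac}}u\in\ce$ demanded by Proposition \ref{prop:t_decay_1/2}; the paper does not verify these either, and they must be read as implicit additional assumptions on $V$ and $u$. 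Finally, your Cauchy--Schwarz criterion for membership in $\ce$ is the same device the paper uses in its Remark after the critical-decay proposition, but note the sign of the weight: one needs $\jap{x}^{\sigma/2}P_{\mathrm{ac}}u\in L^2$, i.e.\ $u$ in the decaying class $\jap{x}^{-\sigma/2}\ch$, not $\jap{x}^{\sigma/2}\ch$.
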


\section{Higher-order decay estimates}

We now improve our results by iteration of the previous method. In order to obtain higher-order decay estimates, the main difficulty lies in extending Proposition \ref{prop:AB-BA} for higher powers of $A_h$ and $B_h$. This task is extremely laborious with the current methods so it will not be pursued here. Proposition \ref{prop:AB-BA} allows to construct the operator $\tilde{A}^2=(A_h+B_h)^2$ on $D(\tilde{A}^2)=D(A_h^2)\supset D(A^2)$. This will be enough to increase the time decay rate by a power of one as shown in the propositions below.

In this section, we will restrict ourselves to the case $s=0$ and $P=\textrm{Id}$ in \ref{H}, \ref{Ha} and \ref{Hb}. So here $\tilde{A}=A+B$ defined on $D(\tilde{A})=D(A)$, $B$ is bounded on $\ch$ and the commutation relation reads $[H,\tilde{A}]=Q$.

We now recall the necessary formalism of higher-order regularity of operators (see \cite{ABG} and \cite{GLS} for further discussion). Let $A$ be a self-adjoint operator on a Hilbert space $\ch$ and
$k\in\N$.  We say that a bounded operator \emph{$S$ is of class $C^k(A)$}, and we write
$S\in C^k(A)$, if the map
$ \R\ni t\mapsto \e^{-\i t A}S\e^{\i tA}S\in B(\ch)$ is of class $C^k$
in the strong operator topology. It is clear that $S\in C^{k+1}(A)$ if
and only if $S\in C^{1}(A)$ and $[S,A]\in C^{k}(A)$. Clearly $C^{k}(A)$ is a
$*$-subalgebra of $ B(\ch)$ and if $S\in B(\ch)$ is bijective and
$S\in C^{k}(A)$ then $S^{-1}\in C^{k}(A)$,

For any $S\in B(\ch)$ let $\tilde{\ca}(S)=[S,\i A]$ considered as a
sesquilinear form on $D(A)$. We may iterate this and define a
sesquilinear form on $D(\tilde{A}^k)$ by:
\[
S^{(k)}\equiv\tilde{\ca}^k(S)=\i^k\sum_{i+j=k} \frac{k!}{i!j!} (-\tilde{A})^i S \tilde{A}^j.
\]
Then $S\in C^k(A)$ if and only if this form is continuous for the
topology induced by $\ch$ on $D(A^k)$. We keep the notation $\tilde{\ca}^k(S)$
or $S^{(k)}$ for the bounded operator associated to its continuous
extension to $\ch$.

Now let $H$ be a self-adjoint operator on $\ch$ and
$ R(z)= (H-z)^{-1}$ for $z$ in the resolvent set $\rho(H)$ of $H$.  We
say that \emph{$H$ is of class $C^k(A)$} if $R(z_{0})\in C^k(A)$ for
some $z_{0}\in\rho(H)$; then we shall have $R(z)\in C^k(A)$ for all
$ z\in \rho(H)$ and more generally $\varphi(H)\in C^k(A)$ for a large
class of functions $\varphi$ (e.g. rational and bounded on the
spectrum of $H$).		

We shall say that a densely defined operator $S$ on $\ch$ is \emph{boundedly invertible} if $S$ is injective,
its range is dense, and its inverse extends to a continuous operator on $\ch$. If $S$ is symmetric
this means that $S$ is essentially self-adjoint and 0 is in the resolvent set of its closure.

\begin{proposition}\label{prop:t=-2_decay}
Let $H$ be of class $C^2(A)$. Assume \ref{H} and \ref{Ha} with $s=0$ and $Q=Q(H)$ such that $Q'$ is a bounded function. Then for $u\in\ch$ such that $u\in D(A^2)\cap D(Q)$ we have the estimate $|\psi_{Qu}(t)|\leq C_u\jap{t}^{-2}$.
\end{proposition}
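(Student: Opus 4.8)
The plan is to prove that $t^{2}\psi_{Qu}(t)$ is bounded uniformly in $t$; together with the elementary bound $|\psi_{Qu}(t)|\le\|Qu\|^{2}$, valid for all $t$ and in particular for $|t|\le1$, this gives $|\psi_{Qu}(t)|\le C_{u}\jap{t}^{-2}$. I work in the present setting $s=0$, $P=\mathrm{Id}$, so that $\tilde A=A+B$ with $B$ bounded, $H$ is of class $C^{1}(\tilde A)$ by Proposition \ref{prop:Atilde}, and the commutator identity established earlier reads $[\e^{\i tH},\tilde A]=tQ\e^{\i tH}$, where $Q=Q(H)$ and hence commutes with $\e^{\i tH}$.

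First I would iterate this identity. Applying it once more and using the Leibniz rule together with the fact that $Q$ commutes with $\e^{\i tH}$,
\begin{align*}
[[\e^{\i tH},\tilde A],\tilde A]=[\,tQ\e^{\i tH},\tilde A\,]=t^{2}Q^{2}\e^{\i tH}+t[Q,\tilde A]\e^{\i tH}.
\end{align*}
Since $Q=Q(H)$ with $Q'$ bounded and $H\in C^{1}(\tilde A)$, the functional calculus gives $[Q,\tilde A]=-\i Q'(H)[H,\i\tilde A]=-\i Q'(H)Q$, so the correction term equals $-\i Q'(H)\,tQ\e^{\i tH}=-\i Q'(H)[\e^{\i tH},\tilde A]$. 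Rearranging yields the basic identity
\begin{align*}
t^{2}Q^{2}\e^{\i tH}=[[\e^{\i tH},\tilde A],\tilde A]+\i Q'(H)[\e^{\i tH},\tilde A].
\end{align*}

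Next I would pair this with $u$. On the left, since $u\in D(Q)$ and $Q$ is self-adjoint and commutes with $\e^{\i tH}$, one has $\langle u,t^{2}Q^{2}\e^{\i tH}u\rangle=t^{2}\langle Qu,\e^{\i tH}Qu\rangle=t^{2}\psi_{Qu}(t)$. On the right I would expand the double commutator as $\e^{\i tH}\tilde A^{2}-2\tilde A\e^{\i tH}\tilde A+\tilde A^{2}\e^{\i tH}$ and shift one factor of $\tilde A$ (respectively of $\tilde A$ and $Q'(H)$ in the last term) onto the left entry of the inner product, using that $\tilde A$ is self-adjoint, that $\e^{\i tH}$ is unitary, and that $\e^{\i tH}$ and $Q'(H)$ leave $D(\tilde A)$ invariant because $H\in C^{1}(\tilde A)$. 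This gives
\begin{align*}
t^{2}\psi_{Qu}(t)=&\;\langle\e^{-\i tH}u,\tilde A^{2}u\rangle-2\langle\tilde Au,\e^{\i tH}\tilde Au\rangle+\langle\tilde A^{2}u,\e^{\i tH}u\rangle\\
&+\i\langle\e^{-\i tH}Q'(H)u,\tilde Au\rangle-\i\langle\tilde AQ'(H)u,\e^{\i tH}u\rangle.
\end{align*}
Each term on the right is bounded uniformly in $t$ by a constant depending only on $\|u\|$, $\|\tilde Au\|$ and $\|\tilde A^{2}u\|$ (recall $Q'$ is bounded), and these quantities are finite precisely because $u\in D(A^{2})=D(\tilde A^{2})$, the domain on which $\tilde A^{2}$ was constructed via Proposition \ref{prop:AB-BA}. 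This establishes the uniform bound and hence the claim.

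The main obstacle is turning the formal manipulations above into rigorous identities of sesquilinear forms on a dense domain. This requires, first, that the iterated commutator and the operator $\tilde A^{2}$ genuinely make sense: here I rely on $H\in C^{2}(A)$ together with the construction of $\tilde A^{2}=(A+B)^{2}$ on $D(A^{2})$ and the boundedness of $[\tilde A,B]$ and $P[\tilde A,K]P$ furnished by Proposition \ref{prop:AB-BA} and assumption \ref{Ha}. Second, it requires justifying the domain juggling (moving $\tilde A$, $\e^{\i tH}$ and $Q'(H)$ across the inner product). As in the proofs of Propositions \ref{prop:Atilde} and \ref{prop:AB-BA}, I would carry this out by regularizing — replacing $H$ by $H_{\epsilon}=H(1+\i\epsilon H)^{-1}$ and/or $\tilde A$ by $\tilde A_{\epsilon}=\tilde A(1+\i\epsilon\tilde A)^{-1}$ — proving the identity for these bounded objects, and then letting $\epsilon\to0$ on $D(A^{2})$, where the uniform estimates above control the passage to the limit.
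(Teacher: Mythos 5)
Your proposal is correct and follows essentially the same route as the paper: both derive the double-commutator identity $t^{2}Q^{2}\e^{\i tH}=\pm\big(\tilde{\ca}^{2}(\e^{\i tH})-Q'(H)\tilde{\ca}(\e^{\i tH})\big)$ from $[\e^{\i tH},\tilde A]=tQ\e^{\i tH}$ and $[Q(H),\tilde A]=-\i Q'(H)Q$, pair with $u$, bound each term uniformly in $t$ by $\|u\|,\|\tilde Au\|,\|\tilde A^{2}u\|$ with $\|\tilde A^{2}u\|$ controlled via Proposition \ref{prop:AB-BA}, and then treat general $u\in D(A^{2})\cap D(Q)$ by resolvent regularization $u_{\epsilon}=R_{\epsilon}u$ using $C^{2}(\tilde A)$ regularity. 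The only (immaterial) discrepancy is an overall sign convention in the iterated commutator, which disappears once absolute values are taken.
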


\begin{proof}
Note that as sesquilinear forms in $D(Q^2)\cap D(\tilde{A})$ one has the identity $$t^2Q^2\e^{\i tH}=\tilde{\ca}^2(\e^{\i tH})-Q'(H)\tilde{\ca}(\e^{\i tH}).$$
Note also that $\tilde{\ca}^2(H)=Q'(H)Q(H)$, hence $H\in C^2(\tilde{A})$ by Propositions 7.2.16 and 6.2.10 in \cite{ABG}.

Assume first that $u\in\ch^1$, then
\begin{eqnarray*}
|t^2\psi_{Qu}(t)|&=&|\braket{u}{t^2Q^2\e^{\i tH}u}|\\
&\leq&|\braket{u}{\tilde{\ca}^2(\e^{\i tH})u}|+|\braket{u}{Q'(H)\tilde{\ca}(\e^{\i tH})u}|\\
&\leq& C(\|u\|\|\tilde{A}^2u\|+\|\tilde{A}u\|^2)+\|u\|\|\tilde{A}Q'(H)u\|\\
&\leq&C\|u\|(\|\tilde{A}u\|+\|\tilde{A}^2u\|)+\|\tilde{A}u\|^2.
\end{eqnarray*}
By Proposition \ref{prop:AB-BA}, $(A+B)^2u$ is well defined and $\|\tilde{A}^2u\|\leq\|A^2u\|+C\|Au\|+\|u\|$, thus $|\psi_{Qu}(t)|\leq C_u\jap{t}^{-2}$, with $C_u=C\|u\|\big(\|u\|+\|Au\|+\|A^2u\|\big)+\|Au\|^2$.

For general $u\in D(A^2)\cap D(Q)$ we use $u_\epsilon=R_\epsilon u\in\ch^1$. Note that $u_\epsilon\in D(\tilde{A}^2)$ because $R_\epsilon\in C^2(\tilde{A})$. Proceeding like in the proof of Proposition \ref{prop:t_decay_one} and letting $\epsilon\to 0$ we obtain the desired result.\qed
\end{proof}

\begin{remark}
Note that if the operator $Q(H)$ is assumed boundedly invertible, the estimate of Proposition \ref{prop:t=-2_decay} holds for $\psi_u$(t), with $u\in D(A^2)\cap D(Q)$.
\end{remark}

\begin{proposition}\label{prop:t_decay_3/2_s=0}
Let $H$ be of class $C^2(A)$. Assume \ref{H}, \ref{Ha} and \ref{Hb} in the special case $Q=cH$ and $s=0$. Let  $u\in\ch$ be of the form $u=|H|^{1/2}v$, for some $v\in D(A^2)$ such that $A^jv\in\ce$, $j=0,1,2$ and $ABv\in\ce$. Then $|\psi_{u}(t)|\leq C_u\jap{t}^{-3/2}$.
\end{proposition}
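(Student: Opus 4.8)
The plan is to reproduce the low/high energy splitting of Proposition~\ref{prop:t_decay_1/2_s=0}, but to extract one extra power of decay from the second--order commutator identity established in the proof of Proposition~\ref{prop:t=-2_decay}. Fix a smooth cut--off $\chi$ with $\chi\equiv1$ near $0$ and $\supp\chi\subset[-M,1]$, and write $u=u_1+u_2$ with $u_1=\chi(H)|H|^{1/2}v$ and $u_2=(1-\chi(H))|H|^{1/2}v$, so that $\psi_u=\psi_{u_1}+\psi_{u_2}$; it suffices to show $|\psi_{u_1}(t)|\le C\jap{t}^{-3/2}$ and $|\psi_{u_2}(t)|\le C\jap{t}^{-2}$. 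On $\Ran(1-\chi(H))$ the spectrum of $H$ is bounded away from $0$, so $Q=cH$ is boundedly invertible there; writing $u_2=Qw_2$ with $w_2=\tfrac1c\,\textrm{sgn}(H)|H|^{-1/2}(1-\chi(H))v=:g_2(H)v$, the multiplier $g_2$ is a bounded $C^2(A)$ function of $H$, whence $w_2\in D(A^2)$ since $v\in D(A^2)$, while $Hw_2=\tfrac1c(1-\chi(H))u\in\ch$ gives $w_2\in D(Q)$. Proposition~\ref{prop:t=-2_decay}, in the boundedly invertible form of the subsequent Remark, then yields $|\psi_{u_2}(t)|=|\psi_{Qw_2}(t)|\le C\jap{t}^{-2}$, which is better than needed.

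The heart of the matter is the low energy piece, where the half--power weight is decisive. Put $v_1=\chi(H)v$, so $u_1=|H|^{1/2}v_1$ and $\psi_{u_1}(t)=\braket{v_1}{|H|\e^{\i tH}v_1}$. Differentiating once, using $H|H|=H^2\,\textrm{sgn}(H)$, and inserting the identity $t^2H^2\e^{\i tH}=\tfrac1{c^2}\tilde{\ca}^2(\e^{\i tH})-\tfrac1c\tilde{\ca}(\e^{\i tH})$ (valid here as a form on $D(H^2)\cap D(\tilde A^2)$, which contains $v_1$), I get
\[
t^2\psi_{u_1}'(t)=\i\big\langle v_1,\big(\tfrac1{c^2}\tilde{\ca}^2(\e^{\i tH})-\tfrac1c\tilde{\ca}(\e^{\i tH})\big)\textrm{sgn}(H)v_1\big\rangle .
\]
Expanding the commutators in the form sense distributes at most two copies of $\tilde A$ onto the two vectors, so the right--hand side becomes a finite sum of $t$--free correlations $\braket{\tilde A^{j}v_1}{\e^{\i tH}\,\textrm{sgn}(H)\tilde A^{k}v_1}$ with $j+k\le2$. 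The key simplification is that $\textrm{sgn}(H)$ commutes with $\tilde A$: the relation $[H,\tilde A]\propto H$ means $\tilde A$ generates a dilation of $\mathrm{Sp}(H)$, under which $\textrm{sgn}(H)$ is invariant, so $[\tilde A,\textrm{sgn}(H)]=0$. In particular the threshold singularity of $|H|^{1/2}$ never meets the conjugate operator. Since moreover $\textrm{sgn}(H)$ preserves $\ce$ (one has $\psi_{\textrm{sgn}(H)w}=\psi_w$ on the absolutely continuous subspace), each correlation is controlled once the vectors $\tilde A^{j}v_1$, $j=0,1,2$, are shown to lie in $\ce$.

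To verify $\tilde A^{j}v_1\in\ce$ I commute $\chi(H)$ past $\tilde A$ using $[\tilde A,\chi(H)]=\i c\,\chi'(H)H$, a bounded compactly supported function of $H$ and hence $\ce$--preserving, and expand $\tilde A=A+B$. The pure $A$--terms are supplied directly by the hypotheses $A^jv\in\ce$ and $ABv\in\ce$; each term carrying a factor of $B$ is treated exactly as in the proof of Proposition~\ref{prop:t_decay_1/2_s=0}, by writing it as $(A+\i)^{-1}$ applied to an $\ch$--vector and invoking Assumption~\ref{Hb}. Here the boundedness of $C=[\tilde A,B]$ from Proposition~\ref{prop:AB-BA} and the invariance $B\,D(\tilde A)\subset D(\tilde A)$ from its Corollary are what guarantee that expressions such as $(A+\i)BAv$ and $(A+\i)B^2v$ genuinely lie in $\ch$. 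With every correlation thus in $L^2(\R)$ by polarization, I conclude $t^2\psi_{u_1}'\in L^2(\R)$.

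Finally, $t^2\psi_{u_1}'\in L^2$ forces $\psi_{u_1}'\in L^1$ near infinity, and since $u_1$ is absolutely continuous $\psi_{u_1}(t)\to0$; therefore
\[
|\psi_{u_1}(t)|=\Big|\int_t^\infty\psi_{u_1}'(s)\,\d s\Big|\le\Big(\int_t^\infty s^{-4}\,\d s\Big)^{1/2}\|s^2\psi_{u_1}'\|_{L^2(\R)}\le C\,t^{-3/2},
\]
which combines with the high energy bound to give $\jap{t}^{-3/2}$ for all $t$. I expect the main obstacle to be precisely the bookkeeping behind $t^2\psi_{u_1}'\in L^2$: one must check that every second--order commutator term lands in $\ce$, and this is where Assumption~\ref{Hb}, together with the boundedness and invariance statements of Proposition~\ref{prop:AB-BA} and its Corollary, are used in full. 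The observation that $\textrm{sgn}(H)$ commutes with $\tilde A$ is exactly what makes the half--integer rate $3/2$ accessible, by keeping the non--smooth weight away from $\tilde A$ while still allowing two factors of $t$ to be absorbed.
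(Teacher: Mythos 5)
Your proof is correct and follows essentially the paper's own route: the same low/high energy splitting, the same use of the second-order identity $t^2Q^2\e^{\i tH}=\tilde{\ca}^2(\e^{\i tH})-Q'(H)\tilde{\ca}(\e^{\i tH})$ on the low-energy piece, and the same verification that the vectors $\tilde{A}^jv$ lie in $\ce$ via assumption \ref{Hb}, Proposition \ref{prop:AB-BA} and its Corollary. Where you deviate, your choices are refinements rather than a different method, and two of them are worth noting. First, your dilation argument for $[\tilde{A},\mathrm{sgn}(H)]=0$ (the group $\e^{\i\tau\tilde{A}}$ scales $H$ by $\e^{-c\tau}$, so $\mathrm{sgn}(H)$ is invariant) supplies a justification the paper leaves implicit: the paper commutes $\mathrm{sgn}(H)$ through $\tilde{A}$ as if it were just another "bounded function of $H$", but $\mathrm{sgn}$ is not smooth at the threshold $0$, which lies inside the support of $\chi$, so this step does not follow from $H\in C^2(\tilde{A})$ alone; your observation (together with the remark that $\mathrm{sgn}(H)$ preserves $\ce$) is exactly what closes it. Second, your direct Cauchy--Schwarz integration of $t^2\psi_{u_1}'\in L^2(\R)$, combined with Riemann--Lebesgue for the limit at infinity, replaces the citation of Corollary 8.3 in \cite{GLS} and in fact needs only the single condition $t^2\psi_{u_1}'\in L^2$, whereas the paper checks both $t\psi_{u_1}'$ and $t^2\psi_{u_1}'$. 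Finally, for the high-energy piece you invoke Proposition \ref{prop:t=-2_decay} applied to $w_2$ with $Qw_2=u_2$ (the appeal to the Remark is not even needed, since you construct $w_2$ explicitly), while the paper repeats the computation with $u_2=cH\jap{H}^{-1}v_2$; these are the same estimate. Your smooth cutoff is also cleaner than the paper's sharp characteristic function, for which $\chi(H)\in C^2(\tilde{A})$ is itself delicate.
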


\begin{proof}
 Define $\chi=\chi_{[-M,1]}$ the characteristic function of the interval in $\R$. We decompose $u:=u_1+u_2$, where $u_1=\chi(H)u$ and $u_2=(1-\chi(H))u$. Note that $\psi_{Pu}=\psi_{Pu_1}+\psi_{Pu_2}$. We will show that $\psi_{u_1}=O(t^{-3/2})$ and $\psi_{u_2}=O(t^{-2})$. To study $\psi_{u_1}$ we rely on Corollary 8.3 in \cite{GLS}, that is, we need to prove that both $t\psi_{u_1}(t)$ and $t^2\psi_{u_1}(t)$ are functions in $L^2(\R)$. We only show the latter, the former is an analogous calculation. Set $v_1=\chi(H) v$.
\begin{eqnarray*}
|\i c^2t^2\psi_{u_1}'(t)| &=& |\braket{u_1}{c^2t^2H\e^{\i tH}u_1}|\\
&=&|\braket{v_1}{c^2t^2H^2\e^{\i tH}\sign(H)v_1}|\\
&\leq&|\braket{v_1}{\tilde{\ca}^2(\e^{\i tH})v_1}|+|c\braket{v_1}{\tilde{\ca}(\e^{\i tH})v_1}|.
\end{eqnarray*}
We now expand the first term of the above expression, the second one is analogous. Since $\chi(H)$ is a bounded function and $H\in C^2(\tilde{A})$, $\chi(H$) it can be commuted through $\tilde{A}$ and so we can replace $v_1$ with $v$ (up to a bounded function of $H$).
\begin{eqnarray*}
\braket{v}{\tilde{\ca}^2(\e^{\i tH})v}=\braket{\e^{-\i tH}v}{\tilde{A}^2v}-2\braket{\tilde{A}v}{\e^{\i tH}\tilde{A}v}+\braket{\tilde{A}^2v}{\e^{\i tH}v}
\end{eqnarray*}
Note that $\tilde{A}v=Av+(A+\i)^{-1}(A+\i)Bv\in\ce$ by assumption \ref{Hb}. Similarly, $\tilde{A}^2v\in\ce$ because
\begin{eqnarray*}
\tilde{A}^2v&=&A^2v+ABv+BAv+B^2v\\
&=&A^2v+ABv+(A+\i)^{-1}(A+\i)BAv+(A+\i)^{-1}(A+\i)B^2v.
\end{eqnarray*}
This yields that $t^2\psi_{u_1}'(t)\in L^2(\R)$ as desired. Now for $u_2$, we consider $v_2\in\ch$ such that $u_2=cH\jap{H}^{-1}v_2$. Then
\begin{eqnarray*}
|t^2\psi_{u_2}(t)|&=&|\braket{u_2}{t^2\e^{\i tH}u_2}|\\
&=&|\braket{v_2}{c^2t^2H^2\e^{\i tH}\jap{H}^{-2}v_2}|\\
&\leq& C(\|v_2\|\|\tilde{A}^2v_2\|+\|\tilde{A}v_2\|^2)\\
&\leq& C\|u_2\|(\|u_2\|+\|Au_2\|+\|A^2u_2\|)+\|Au_2\|^2,
\end{eqnarray*}
which concludes the proof.\qed
\end{proof}

\section{Applications}

\subsection{Potential of critical decay}
Here we consider the equation $\i\partial_tu+\Delta u-V(x)u=0$ in $\R^n$ $(n\geq 3)$, with initial condition $u(0,x)=f(x)$. In \cite{BPSS}, a resolvent estimate is used to obtain weighted $L^2$ estimates for time-independent potentials $V(x)\in C^1(\R^n\setminus\{0\})$ satisfying the following assumptions.
\begin{enumerate}
\item[(A1)]$\sup_{x\in\R^n}|x|^2|V|<\infty$
\item[(A2)] The operator $\Delta\!\!\!\!/+|x|^2V+\lambda^2$ is positive on every sphere, i.e., there is a $\delta>0$ such that for every $r>0$, $$\int_{|x|=r}|\nabla\!\!\!\!/u(x)|^2+(\lambda^2+|x|^2V(x))|u(x)|^2\d\sigma(x)\geq\delta^2\int_{|x|=r}|u(x)|^2\d\sigma(x)$$
\item[(A3)] The operator $\Delta\!\!\!\!/+|x|^2\tilde{V}+\lambda^2$ is positive on every sphere, i.e., (A2) holds with $\tilde{V}$ in place of $V$.
\end{enumerate}
Here $\Delta\!\!\!\!/$ represents the spherical Laplacian, $\tilde{V}:=\partial_r(rV(x))$ and $\lambda=(n-2)/2$.

In Section 3 of that paper the following Morawetz estimate is obtained
\begin{equation}\label{eq:morawetz_sq_potential}
\left\| |x|^{-1}\e^{-\i tH}f\right\|_{L_t^2L_x^2}\leq C\|f\|_{L^2}.
\end{equation}

We will use this result to obtain pointwise estimates using the generator of dilations $A=-\tfrac{i}{2}(x\cdot\nabla+\nabla\cdot x)$ as the conjugate operator. In order to verify the $C^1(A)$ regularity for $H$ and the other conditions of \ref{H} we will consider the following additional assumptions.

\begin{enumerate}
\item[(A4)] $V$ is nonnegative and locally integrable in $\R^n$
\item[(A5)] $\sup_{x\in\R^n}|x|^3|\nabla V|<\infty$ and $\|(x\cdot\nabla V)\jap{V}^{-1}\|_{L^\infty}<\infty$
\end{enumerate}

In \cite{Ka95} Theorem 4.6a the domain of the Friedrich extension of $H=-\Delta+V(x)$ is characterized as the set of all i) $u\in\ch$ such that $\nabla u$ belongs to $(L^2(\R^n))^n$, ii)$\int V(x)|u|^2\d x<\infty$, and iii) $\Delta u$ exists and $-\Delta u+V(x)u$ belongs to $\ch$. It is easy to check that the dilation group leaves $D(H)$ invariant, in fact ii) and iii) are preserved under condition (A5). In this scenario, the $C^1(A)$ regularity follows from the commutation relation $[H,\i A]=-2\Delta-x\cdot\nabla V=2H-(2V+x\cdot\nabla V)$, which clearly satisfies $\|[Hu,Au]_\circ\|\leq C(\|Hu\|+\|u\|)$ by (A4) and (A5). Note that here $Q=2H$ and $K=2V+x\cdot\nabla V$.

Now write $K=\sign(K)|K|^{1/2}|K|^{1/2}$. Assumptions (A1) and (A4) yield the bound $\| |K|^{1/2}\e^{\i tH}f\|_{L^2}\leq C\| |x|^{-1}\e^{\i tH}f\|_{L^2}$, and then estimate \eqref{eq:morawetz_sq_potential} implies that $K$ is the product of two Kato-smooth operators. Therefore, the assumptions of \ref{H} hold with $s=0$ and the estimate of Proposition \ref{prop:t_decay_1/2_s=0} reads as follows.

\begin{proposition}
Let $H$ and $A$ be as above and $P$ a smooth projection commuting with $H$. Assume conditions (A1)-(A5) and \ref{Hb}. Then for $u\in D(A)$ such that $Pu$ and $PAPu$ are in $\ce$, one has the estimate $|\psi_{Pu}(t)|\leq C_u\jap{t}^{-1/2}$.
\end{proposition}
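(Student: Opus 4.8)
The plan is to recognize this proposition as a direct application of Proposition~\ref{prop:t_decay_1/2_s=0} in the special case $Q=cH$ with $c=2$ and $s=0$. Once the hypotheses \ref{H}, \ref{Ha} and \ref{Hb} are verified for the concrete data $H=-\Delta+V$, $A=-\tfrac{\i}{2}(x\cdot\nabla+\nabla\cdot x)$ and the projection $P$, the pointwise bound $|\psi_{Pu}(t)|\leq C_u\jap{t}^{-1/2}$ follows verbatim. Since \ref{Hb} is assumed in the statement, the real work is to confirm \ref{H} and \ref{Ha} from conditions (A1)--(A5).

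For \ref{H} I would collect the facts already assembled in the preceding discussion. The commutator form $[H,\i A]_\circ=2H-(2V+x\cdot\nabla V)$ identifies $Q=2H$, which is of the required form $cH$ with $c=2\neq0$, and $K=2V+x\cdot\nabla V$ (its overall sign being immaterial below), a real multiplication operator and hence symmetric on $\ch^1$. The $C^1(A)$ regularity of $H$ comes from (A4)--(A5), which supply both the dilation-invariance of $D(H)$ via the Kato domain characterization recalled above and the bound $\|[Hu,Au]_\circ\|\leq C(\|Hu\|+\|u\|)$. The decisive point is the Kato-smooth factorization $K=F^*E$ with $E=|K|^{1/2}$ and $F=\sign(K)|K|^{1/2}$: conditions (A1) and (A5) yield the pointwise bound $|K(x)|\leq C\jap{x}^{-2}$, whence $\||K|^{1/2}\e^{\i tH}f\|\leq C\||x|^{-1}\e^{\i tH}f\|$, and the Morawetz estimate~\eqref{eq:morawetz_sq_potential} then makes $E$ and $F$ $|H|^s$-smooth on $\Ran P$ with $s=0$. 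I would also note that smoothness of $P$ (so that $P\in C^1(A)$) is what lets one sandwich the relation as $P[H,\i A]P=P(Q+K)P$.

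The main obstacle is \ref{Ha}, which requires that the form $P[A,K]_\circ P$ again factor through Kato-smooth operators $E',F'$ and extend to an operator $K'$ that is bounded for $s=0$. Since $\i A$ acts as $x\cdot\nabla+\tfrac n2$, one computes $[\i A,K]=x\cdot\nabla K=3\,(x\cdot\nabla V)+\sum_{i,j}x_ix_j\,\partial_i\partial_j V$ as a multiplication operator. The first term has critical decay $\lesssim\jap{x}^{-2}$ directly from (A5); the delicate part is the second term, which brings in second derivatives of $V$ and must be shown to inherit the same $\jap{x}^{-2}$ decay, together with enough regularity near the origin, from the structural hypotheses on $V$. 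Granting the resulting pointwise bound $|[\i A,K](x)|\leq C\jap{x}^{-2}$, the factorization $K'=(F')^*E'$ with $E'=|K'|^{1/2}$ and $F'=\sign(K')|K'|^{1/2}$ proceeds exactly as for $K$, and \eqref{eq:morawetz_sq_potential} again furnishes the $|H|^s$-smoothness of $E',F'$ on $\Ran P$.

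With \ref{H}, \ref{Ha} and \ref{Hb} established for this $H$, $A$ and $P$, the conclusion is immediate: Proposition~\ref{prop:t_decay_1/2_s=0} applies with $Q=2H$ and $s=0$, so for every $u\in D(A)$ with $Pu,PAPu\in\ce$ one obtains $|\psi_{Pu}(t)|\leq C_u\jap{t}^{-1/2}$, as claimed.
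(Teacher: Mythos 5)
Your proposal follows the paper's own proof almost step for step: the paper likewise treats this proposition as a direct invocation of Proposition \ref{prop:t_decay_1/2_s=0} with $Q=2H$ and $s=0$, after verifying \ref{H} exactly as you do --- $C^1(A)$ regularity from (A4)--(A5) together with Kato's characterization of $D(H)$, the factorization $K=\sign(K)|K|^{1/2}\,|K|^{1/2}$, and Kato-smoothness of the factors from the bound $\||K|^{1/2}\e^{\i tH}f\|\leq C\||x|^{-1}\e^{\i tH}f\|$ combined with the Morawetz estimate \eqref{eq:morawetz_sq_potential}. One small correction to your write-up: (A1) and (A5) give $|K(x)|\leq C|x|^{-2}$, not $C\jap{x}^{-2}$; the two differ near the origin, where $V$ may blow up. This is harmless for the Kato-smoothness step, which only needs domination by $|x|^{-2}$, but it does mean that the boundedness of $K$ required by \ref{H} at $s=0$ is itself not an automatic consequence of (A1)--(A5); the paper glosses over this point as well.

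Where you genuinely diverge from the paper is \ref{Ha}, and your scrutiny there is warranted. The paper never verifies \ref{Ha} in this application: it checks \ref{H} and immediately invokes Proposition \ref{prop:t_decay_1/2_s=0}, whose hypotheses include \ref{Ha} (contrast the manifold application, where the paper does note that \ref{Ha} holds because $[A,K]$ is again a second-order operator with the same support and boundedness properties). Your computation $[\i A,K]=x\cdot\nabla K=3\,(x\cdot\nabla V)+\sum_{i,j}x_ix_j\partial_i\partial_jV$ pinpoints exactly what is missing: conditions (A1)--(A5) control $V$ and $\nabla V$ only, and since $V$ is merely assumed to lie in $C^1(\R^n\setminus\{0\})$, the second-derivative term need not even be defined, let alone decay like $|x|^{-2}$. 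So your proof is conditional on the bound you ``grant,'' and strictly speaking that is a gap --- but it is a gap inherited from the paper, not one you introduced: as stated, (A1)--(A5) and \ref{Hb} do not suffice to apply Proposition \ref{prop:t_decay_1/2_s=0}. To close it one must either add \ref{Ha} to the proposition's hypotheses, or strengthen (A5) to something like $V\in C^2(\R^n\setminus\{0\})$ with $\sup_x|x|^4|\nabla^2V(x)|<\infty$, after which your granted bound is automatic, the factors $E'=|K'|^{1/2}$ and $F'=\sign(K')|K'|^{1/2}$ are dominated by $C|x|^{-1}$, and their Kato-smoothness follows from \eqref{eq:morawetz_sq_potential} exactly as for $E,F$. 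With that amendment your argument, like the paper's, is complete.
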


\begin{remark}
We can use the results of \cite{BPSS} to characterize the space $\ce$. For instance, the endpoint Strichartz estimate $\|\e^{-\i tH}u\|_{L^2_t(L^6_x)}\leq C\|u\|_{L^2}$ for $n=3$ yields that for $u\in L^{6/5}(\R^3)$, we have $\|\psi_u\|_{L^2_t}\leq\|u\|_{L^{6/5}}\|\e^{-\i tH}u\|_{L^2_t(L^6_x)}\leq C\|u\|_{L^{6/5}}\|u\|_{L^2}$. Or we can use the estimate \eqref{eq:morawetz_sq_potential} in dimension $n$, to show that if $xu\in L^2(\R^n)$, one has $\|\psi_u\|_{L^2_t}\leq \|xu\|_{L^2}\|u\|_{L^2}$.
\end{remark}

\subsection{Laplacian on manifold} Let $(M,g)=(\R^3,g)$ be a compact perturbation of $\R^3$, i.e. $M$ is $\R^3$ endowed with a smooth metric $g$ which equals the Euclidean metric outside of a ball $B(0,R_0)=\{x\in\R^3:|x|\leq R_0\}$ for some fixed $R_0$. In \cite{RT} global-in-time decay estimates were obtained for solutions to the Schr\"odinger equation $iu_t=Hu$, where $H=-\frac{1}{2}\Delta_M$ is the Laplace-Beltrami operator on $M$. It has been shown that $H$ defined on $\cs:=C^\infty(M)$ (smooth functions of compact support) is essentially self-adjoint and its domain is the Sobolev space $H^2(M)$.
The main result of their paper is the following.

\begin{theorem*}
Let $M$ be a smooth compact perturbation of $\R^3$ which is nontrapping and smoothly diffeomorphic to $\R^3$. Then for any Schwartz solution $u(x,t)$ and any $\sigma>0$ we have
\begin{equation}\label{eq:RodTao}
\int_{\R}\|\jap{x}^{-1/2-\sigma}\nabla\e^{-\i tH}u_0\|^2_{L^2(M)}+\|\jap{x}^{-3/2-\sigma}\e^{-\i tH}u_0\|^2_{L^2(M)}\,dt\leq C_{\sigma,M}\|u_0\|^2_{\dot{H}^{1/2}(M)}.
\end{equation}
\end{theorem*}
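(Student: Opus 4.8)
The plan is to prove \eqref{eq:RodTao} directly by a positive-commutator (Morawetz) argument rather than by citation, exploiting that $M$ is exactly Euclidean outside $B(0,R_0)$ and nontrapping inside. First I would choose a symmetric first-order multiplier $A=\tfrac12(a\cdot\nabla+\nabla\cdot a)$ with $a=\nabla\phi$ for a real escape function $\phi$. On the Euclidean end take $\phi$ radial with $\phi'(r)\sim r^{-2\sigma}$, so that $\mathrm{Hess}\,\phi\gtrsim\jap{x}^{-1-2\sigma}$ in all directions while $a$ stays bounded. The Heisenberg identity then gives
$$\int_{\R}\braket{\e^{-\i tH}u_0}{[H,\i A]\e^{-\i tH}u_0}\,dt=\big[\braket{\e^{-\i tH}u_0}{A\e^{-\i tH}u_0}\big]_{t=-\infty}^{t=+\infty},$$
so the whole problem reduces to (i) a coercive lower bound for the commutator and (ii) control of the boundary term.

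For (ii): since $A$ is first order with bounded coefficients, symmetrising the derivative yields $|\braket{w}{Aw}|\lesssim\|w\|_{\dot{H}^{1/2}(M)}^2$, and because $\e^{-\i tH}$ is unitary and commutes with $|H|^{1/2}$ this bounds the boundary term by $\|u_0\|_{\dot{H}^{1/2}(M)}^2$, exactly the right-hand side of \eqref{eq:RodTao}. For (i): a direct computation gives, schematically, $[H,\i A]=2\nabla^*(\mathrm{Hess}\,\phi)\nabla+(\text{lower order})$, whose principal part is bounded below by $c\,\nabla^*\jap{x}^{-1-2\sigma}\nabla$ on the Euclidean region; this reproduces the two terms $\|\jap{x}^{-1/2-\sigma}\nabla\e^{-\i tH}u_0\|^2$ (radial and tangential derivatives) on the left of \eqref{eq:RodTao}, while the $\jap{x}^{-3/2-\sigma}$ zeroth-order term is produced by the $-\tfrac12\Delta_M^2\phi$ correction.

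The hard part is to make the commutator coercive \emph{globally}, i.e. also inside $B(0,R_0)$ where $g$ is a genuine perturbation and the radial $\phi$ above need not give positivity. This is precisely where nontrapping is used: I would construct $\phi$ microlocally by propagating the symbol of the good multiplier along the bicharacteristic flow of the principal symbol $p=\tfrac12|\xi|_g^2$, arranging $\{p,\phi\}\geq0$ modulo terms supported where the flow has already reached the Euclidean end. Nontrapping guarantees that every bicharacteristic escapes to infinity in finite time, so such a globally coercive $\phi$ exists; the sharp G\aa rding inequality then upgrades the symbolic positivity to an operator lower bound, with the compactly supported error absorbed by the interior smoothing. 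A separate low-energy analysis is needed because the semiclassical picture degenerates near $\lambda=0$; there one uses that $H\geq0$ has no zero eigenvalue or resonance on a nontrapping manifold diffeomorphic to $\R^3$, so the limiting absorption principle survives down to the threshold.

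Alternatively, the same estimate follows from a uniform limiting absorption principle: proving $\sup_{\lambda>0,\,\veps\neq0}\|\jap{x}^{-1/2-\sigma}\nabla R(\lambda+\i\veps)\nabla^*\jap{x}^{-1/2-\sigma}\|<\infty$ is, by the Kato-smoothness criterion recalled in the Preliminaries, equivalent to \eqref{eq:RodTao}. The high-energy half of this bound is the semiclassical nontrapping resolvent estimate (itself proved by a positive-commutator argument), and the bounded-energy half is standard Mourre theory with the dilation generator on the Euclidean end. This route has the same essential difficulty --- building a globally coercive escape function from the nontrapping flow --- but packages the time integration into the resolvent bound, which is the form most directly usable for the Kato-smoothness inputs demanded by assumptions \ref{H}.
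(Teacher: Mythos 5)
First, a point of orientation: the paper does not prove this statement at all. It is the main theorem of Rodnianski--Tao \cite{RT}, quoted verbatim as an external input whose only role here is to supply, via Kato-smoothness of the commutator remainder $K$, the hypotheses \ref{H} for the manifold application. So your proposal is not competing with a proof in this paper but with the proof in \cite{RT} --- and the parts your sketch defers are exactly where that proof lives.

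Within your sketch there are concrete errors and one circularity. (1) Your multiplier is wrong: for radial $\phi$ the Hessian has radial eigenvalue $\phi''(r)$ and tangential eigenvalues $\phi'(r)/r$, so the choice $\phi'(r)\sim r^{-2\sigma}$ gives $\phi''(r)\sim -2\sigma r^{-1-2\sigma}<0$, an \emph{indefinite} Hessian, and moreover $a=\nabla\phi$ blows up at the origin rather than staying bounded. The standard fix is $\phi'(r)=\int_0^r\jap{s}^{-1-2\sigma}\,\d s$, bounded and increasing, so $\phi''>0$; also $\tfrac12(a\cdot\nabla+\nabla\cdot a)$ is skew-adjoint, so the multiplier needs the factor $-\i$. (2) The "equivalent" uniform resolvent bound $\sup_{\lambda,\veps}\|\jap{x}^{-1/2-\sigma}\nabla R(\lambda+\i\veps)\nabla^*\jap{x}^{-1/2-\sigma}\|<\infty$ is false at high energy: already for the free Laplacian this quantity grows like $\lambda^{1/2}$, reflecting that local smoothing gains only half a derivative. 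The statement actually equivalent to \eqref{eq:RodTao} carries the normalization $|H|^{-1/4}$ on each side (equivalently a factor $\lambda^{-1/2}$), which is precisely what the $\dot H^{1/2}$ right-hand side encodes; without it the Kato-smoothness equivalence you invoke does not apply. (3) The claim that the compactly supported G\aa rding error is "absorbed by the interior smoothing" is circular over an infinite time interval: the error is an $O(1)$ lower-order term supported in the perturbed region, and the quantity you would absorb it into is the unknown left-hand side of \eqref{eq:RodTao} itself. This absorption is legitimate only after frequency localization, where the error is $O(h)$ relative to the main term; that leaves the low-frequency regime, where no such smallness exists and where the homogeneous $\dot H^{1/2}$ norm controls neither $\|w\|_{L^2}$ nor the zeroth-order part of $\braket{w}{Aw}$ (so even your boundary-term estimate needs a genuine argument there). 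That low-frequency, global-in-time regime is the heart of \cite{RT}; deferring it to "a separate low-energy analysis" leaves the theorem unproved.
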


We choose the conjugate operator $A=-\i/2(x\cdot\nabla+\nabla\cdot x)$ defined on $\cs$. Note that the dilation group leaves $H^2(M)$ invariant. Assuming that metric is smooth and bounded, the $C^1(A)$ condition follows from the commutation relation $[H,\i A]=2H+K$, where $K$ is a second order operator supported in $B(0,R_0)$. More explicitly, a straightforward calculation on $\cs$ shows that $K$ is an an operator of the form $K=k_1(x)H+\i k_2(x)\nabla+\i k_3(x)$, where the $k_i'$s are smooth and bounded real functions supported in $B(0,R_0)$. Observe also that $\jap{H}^{-1/2}K\jap{H}^{-1/2}$ is bounded. Taking $E=F=\sqrt{|K|}$, the Kato-smoothness is a direct consequence of the estimate \eqref{eq:RodTao} and therefore the assumptions of \ref{H} are met with $s=1/2$. Condition \ref{Ha} holds since $[A,K]$ is a second order differential operator as well. Proposition \ref{prop:t_decay_1/2} yields the following result.

\begin{proposition}
Let $H$ and $A$ be above and $P$ a smooth projection commuting with $H$. Assume that condition \ref{Hb} holds for the space $\ce$. Then for $u\in D(A)$ such that $Pu$ and $PAPu$ are in $\ce$, one has $|\psi_{Pu}(t)|\leq C_u\jap{t}^{-1/2}$.
\end{proposition}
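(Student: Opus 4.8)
The plan is to recognize that the paragraph preceding the statement already identifies the abstract data, and to organize the argument as a verification of assumptions \ref{H}, \ref{Ha}, \ref{Hb} in the case $Q=cH$ with $c=2$ and $s=1/2$, followed by a direct appeal to Proposition \ref{prop:t_decay_1/2}. First I would record the regularity input: computing on the core $\cs$ gives $[H,\i A]=2H+K$ with $K=k_1(x)H+\i k_2(x)\nabla+\i k_3(x)$, and since the dilation group preserves $D(H)=H^2(M)$ and the metric and the $k_i$ are smooth and bounded, this yields $H\in C^1(A)$ and $P[H,\i A]P=P(2H+K)P$ with $Q=2H$ commuting with $H$. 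Symmetry of $K$, the mapping property $K\ch^1\subset\ch$, and boundedness of $\jap{H}^{-1/2}K\jap{H}^{-1/2}$ are then immediate from $K$ being a symmetric second order operator with bounded coefficients supported in $B(0,R_0)$.

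The substantive step is to reduce the Kato-smoothness requirement of \ref{H} to the estimate \eqref{eq:RodTao}. I would introduce the operator
\[
G:=\jap{x}^{-1/2-\sigma}\nabla\,\oplus\,\jap{x}^{-3/2-\sigma},
\]
for which \eqref{eq:RodTao} reads exactly $\int_\R\|G\e^{-\i tH}\psi\|^2\,\d t\leq C\|\psi\|_{\dot{H}^{1/2}(M)}^2$, i.e. $G$ is $|H|^{1/2}$-smooth on the range of $P$. The key computational point is the form bound
\[
|(\phi,K\psi)|\leq C\,\|G\phi\|\,\|G\psi\|,\qquad\phi,\psi\in\ch^1.
\]
I would obtain it by integrating by parts once to move a derivative off the second order term $k_1(x)H=-\tfrac12 k_1(x)\Delta_M$, rewriting $(\phi,K\psi)$ as a finite sum of pairings of first order weighted operators; because every coefficient is supported in $B(0,R_0)$, the weights $\jap{x}^{-1/2-\sigma}$ and $\jap{x}^{-3/2-\sigma}$ are bounded below on that support and each factor is dominated by $\|G\cdot\|$. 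Granted this bound, the bounded symmetric form $(\phi,\psi)\mapsto(\phi,K\psi)$ factors through $\overline{\Ran\,G}$ as $K=G^*BG$ with $B$ bounded self-adjoint; setting $F=G$ and $E=BG$ gives the decomposition $K=F^*E$ of \ref{H}, and both factors are $|H|^{1/2}$-smooth on $\Ran P$ since $\|E\e^{-\i tH}P\psi\|\leq\|B\|\,\|G\e^{-\i tH}P\psi\|$ and likewise for $F$. This establishes \ref{H} with $s=1/2$.

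For \ref{Ha} I would note that, $A$ being first order with the coefficients of $K$ supported in $B(0,R_0)$, the form $P[A,K]_\circ P$ is again a symmetric second order operator with smooth coefficients supported in $B(0,R_0)$; rerunning the previous paragraph with this $K'$ in place of $K$ produces both a factorization $P[A,K]_\circ P=(F')^*E'$ into $|H|^{1/2}$-smooth operators and the boundedness of $\jap{H}^{-1/2}K'\jap{H}^{-1/2}$, which is exactly \ref{Ha}. Assumption \ref{Hb} is granted in the statement. With \ref{H}, \ref{Ha}, \ref{Hb} verified for $Q=cH$, $c=2$, $s=1/2$, the conclusion $|\psi_{Pu}(t)|\leq C_u\jap{t}^{-1/2}$ for $u\in D(A)$ with $Pu,PAPu\in\ce$ is precisely Proposition \ref{prop:t_decay_1/2}.

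I expect the main obstacle to be the form bound above, specifically the control of the genuinely second order piece $k_1(x)H$ by the first order quantities available in \eqref{eq:RodTao}. The single integration by parts is what converts the two derivatives hidden in $H$ into one derivative matching the weight $\jap{x}^{-1/2-\sigma}\nabla$ of the Rodnianski--Tao estimate; without it one would be forced to bound $\jap{x}^{-\sigma}\Delta_M\e^{-\i tH}$, which \eqref{eq:RodTao} does not provide, and the reduction to $|H|^{1/2}$-smoothness would break down.
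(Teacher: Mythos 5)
Your proposal is correct and follows the same skeleton as the paper's treatment: verify \ref{H}, \ref{Ha}, \ref{Hb} with $Q=2H$ and $s=1/2$, then quote Proposition \ref{prop:t_decay_1/2}. The one place where you genuinely diverge is the factorization of $K$ into Kato-smooth factors. The paper takes $E=F=\sqrt{|K|}$ and asserts that their $|H|^{1/2}$-smoothness is ``a direct consequence'' of \eqref{eq:RodTao}; this mirrors the first application (critical-decay potential), where $K$ is a multiplication operator and $\sqrt{|K|}$ is unambiguous. For the manifold case, however, $K=k_1(x)H+\i k_2(x)\nabla+\i k_3(x)$ is a genuine second-order operator, so $\sqrt{|K|}$ presupposes a self-adjoint realization of $K$, and controlling the form $(\phi,|K|\phi)$ by the Rodnianski--Tao quantities does \emph{not} follow formally from a bound on $(\phi,K\phi)$ (the forms of $K$ and $|K|$ are not comparable in general). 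Your route --- the form bound $|(\phi,K\psi)|\leq C\|G\phi\|\|G\psi\|$ obtained by a single integration by parts on the $k_1(x)H$ term, followed by the factorization $K=G^*BG$ with $F=G$, $E=BG$ --- sidesteps $\sqrt{|K|}$ entirely and makes explicit exactly the estimate the paper leaves implicit; the injectivity of the second component $\jap{x}^{-3/2-\sigma}$ guarantees the form factors through $\overline{\Ran\,G}$, and smoothness of $E,F$ then reduces trivially to \eqref{eq:RodTao}. In short: same reduction, but your handling of the key technical step is more careful than the paper's sketch, at the cost of a somewhat longer argument; it also transparently explains why only one derivative (matching the weight $\jap{x}^{-1/2-\sigma}\nabla$ in \eqref{eq:RodTao}) is needed, which is the point your last paragraph rightly emphasizes.
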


\end{document}